\def\BibTeX{{\rm B\kern-.05em{\sc i\kern-.025em b}\kern-.08em
    T\kern-.1667em\lower.7ex\hbox{E}\kern-.125emX}}
\newtheorem{thm}{Theorem}
\newtheorem{defn}{Definition}
\newtheorem{lem}{Lemma}
\begin{document}

\title{ Quaternion Windowed Linear Canonical Transform of Two-Dimensional Signals}
\author{\IEEEauthorblockN{Wen-Biao, Gao$^{1,2}$,
		Bing-Zhao, Li$^{1,2,\star}$,}\\
	\IEEEauthorblockA{
		1. School of Mathematics and Statistics, Beijing Institute of Technology, Beijing 102488, P.R. China}\\
	2. Beijing Key Laboratory on MCAACI, Beijing Institute of Technology, Beijing 102488, P.R. China\\
$^{\star}$Corresponding author: li$\_$bingzhao@bit.edu.cn
}

\maketitle

\begin{abstract}
\label{abstract}
\begin{spacing}{1.25}
	We investigate the 2D quaternion windowed linear canonical transform(QWLCT) in this paper. Firstly, we propose the new definition of the QWLCT, and then several important properties of newly defined QWLCT, such as bounded, shift, modulation, orthogonality relation, are derived based on the spectral representation of the quaternionic linear canonical transform(QLCT). Finally, by the Heisenberg uncertainty
principle for the QLCT and the orthogonality relation property for the QWLCT, the Heisenberg uncertainty principle for the QWLCT is established.
\end{spacing}
\end{abstract}

\begin{IEEEkeywords}
Quaternion linear canonical transform, Quaternion windowed linear canonical transform, Uncertainty principle
\end{IEEEkeywords}

\noindent  {\bf 1. Introduction.}

It is well-known that the linear canonical transform(LCT) has found broad applications in many fields of applied mathematics, signal
processing, radar system analysis, filter design, phase retrieval, and pattern recognition and optics[1,3,4,5]. All of these results show that the LCT plays an important role for chirp signal
analysis in parameter estimation, sampling  and filtering [2-12]. However, the LCT can't show the local LCT-frequency contents as a result of its global kernel. In [15,29] the LCT has been successfully applied to research the generalized the windowed function. The windowed linear canonical transform (WLCT) is a method devised to study signals whose spectral content changes with time. As shown in [14,8] some important properties of the WLCT are discussed. Those include the analogue of the Poisson summation formula, sampling formulas, Paley-Wiener theorem, and uncertainly relations. It presents the time and LCT-frequency information, and is originally a local LCT distribution.

On the other hand, some authors have generalized the linear canonical transform to quaternion-valued signals, known as the quaternionic linear canonical
transform (QLCT). The QLCT was firstly studied in [17] including prolate spheroidal wave signals and uncertainty principles [19]. Some useful properties of
the QLCT such as linearity, reconstruction formula, continuity, boundedness, positivity inversion formula and the uncertainty
principle were established in [17,13,18,20,17,21,30]. An application of the QLCT to study of generalized
swept-frequency filters was introduced in [23]. Because of the non-commutative property of multi-plication of quaternions, there are mainly three various types of 2D quaternion linear canonical transform(QLCTs): Two-sided QLCTs,
Left-sided QLCTs and Right-sided QLCTs (refer to [16]). Based on the (two-sided) QLCT,
one may extend the WLCT to quaternion algebra while possessing similar properties as in the classical case.

The WLCT has been widely used, but the quaternion windowed linear canonical transform (QWLCT) hasn't been mentioned yet.
The main goals of the present paper are to study the properties of the QLCT of 2D quaternionic signals and to derive the novel concept of the QWLCT, and
research important properties of the QWLCT such as bounded, shift, modulation, orthogonality relation. Using the Heisenberg uncertainty principle for the QLCT and the orthogonality relation property for the QWLCT,
 we establish a generalized QWLCT uncertainty principle. This also lays a good foundation for the practical application in the future.

The paper is organized as follows: Section 2 gives a brief introduction to some general
definitions and basic properties of quaternion algebra, QLCTs of 2D Quaternion-valued signals.
We give the definition and study the properties of the QWLCT in section 3.
In Section 4, provides the uncertainty principles associated with the QWLCT. We give some examples of the QWLCT in section 5.
In Section 6, some conclusions are drawn.

\noindent  {\bf 2. Preliminary.}

In this section, we mainly review some basic facts on the quaternion algebra and the QLCT, which will be needed throughout the paper.

\noindent  {\bf 2.1 Quaternion algebra.}

The quaternion algebra is an extension of the complex number to 4D algebra. It was first invented by W. R. Hamilton in 1843 and classically denoted by $\mathbb{H}$ in his honor. Every element of $\mathbb{H}$ has a Cartesian form given by
\begin{align}
\mathbb{H}=\left\lbrace q|q:=[q]_0+\mathbf{i}[q]_1+\mathbf{j}[q]_2+\mathbf{k}[q]_3, [q]_i\in\mathbb{R}, i=0,1,2,3\right\rbrace
\end{align}
where $\mathbf{i}, \mathbf{j}, \mathbf{k}$ are imaginary units obeying Hamilton's multiplication rules (see [16])
\begin{align}
\begin{split}
		&\mathbf{i}^2=\mathbf{j}^2=\mathbf{k}^2=-1,\\
&\mathbf{i}\mathbf{j}=-\mathbf{j}\mathbf{i}=\mathbf{k},\,\mathbf{j}\mathbf{k}=
-\mathbf{k}\mathbf{j}=\mathbf{i},\,\mathbf{k}\mathbf{i}
=-\mathbf{i}\mathbf{k}=\mathbf{j}.
\end{split}
\end{align}
Let $[q]_{0}$ and $q=\mathbf{i}[q]_1+\mathbf{j}[q]_2+\mathbf{k}[q]_3$ denote the real scalar part and the vector part of quaternion number $q=[q]_0+\mathbf{i}[q]_1+\mathbf{j}[q]_2+\mathbf{k}[q]_3$, respectively. Then, from [25],the
real scalar part has a cyclic multiplication symmetry
\begin{align}
\begin{split}
		[pql]_{0}=[qlp]_{0}=[lpq]_{0}, \ \  \forall q,p,l\in \mathbb{H}
\end{split}
\end{align}
the conjugate of a quaternion $q$ is defined by $\overline{q}=[q]_0-\mathbf{i}[q]_1-\mathbf{j}[q]_2-\mathbf{k}[q]_3$, and the norm of $q\in \mathbb{H}$ defined as
\begin{align}
\left| q\right|=\sqrt{q\bar{q}}=\sqrt{[q]_0^2+[q]_1^2+[q]_2^2+[q]_3^2}
\end{align}
it is easy to verify that
\begin{align}
\begin{split}
		\overline{pq}=\overline{qp}, |qp|=|q||p|, \ \ \forall q,p\in \mathbb{H}
\end{split}
\end{align}
The quaternion modules $L^2(\mathbb{R}^2,\mathbb{H})$ are defined as
\begin{align}
\begin{split}
	L^2(\mathbb{R}^2,\mathbb{H}):=\{f|f:\mathbb{R}^2\rightarrow \mathbb{H}, \|f\|_{L^2(\mathbb{R}^2,\mathbb{H})}=\int_{\mathbb{R}^2}|f(x_{1},x_{2})|^{2}{\rm d}x_1{\rm d}x_2<\infty\}	
\end{split}
\end{align}
Now we introduce an inner product of quaternion functions $f,g$ defined on $L^2(\mathbb{R}^2,\mathbb{H})$ is given by
\begin{align}
\label{inner}
	\langle f,g\rangle_{L^2(\mathbb{R}^2,\mathbb{H})}=\int_{\mathbb{R}^2}f(\mathbf{x})\overline{g(\mathbf{x})}{\rm d}\mathbf{x}, \ \ {\rm d}\mathbf{x}={\rm d}x_1{\rm d}x_2
\end{align}
with symmetric real scalar part
\begin{align}
	\langle f,g\rangle=\frac{1}{2}\left \{\langle f,g\rangle+\langle g,f\rangle\right\} =\int_{\mathbb{R}^2}[f(\mathbf{x})\overline{g(\mathbf{x})}]_{0}{\rm d}\mathbf{x}
\end{align}
The associated scalar norm of $f(\mathbf{x})\in L^2(\mathbb{R}^2,\mathbb{H})$ is defined by both (7) and (8):
\begin{align}
	\|f\|_{L^2(\mathbb{R}^2,\mathbb{H})}^2=\langle f,f\rangle_{L^2(\mathbb{R}^2,\mathbb{H})}= \int_{\mathbb{R}^2}|f(\mathbf{x})|^2 {\rm d}\mathbf{x}<\infty
\end{align}
\begin{lem}
If $f,g\in L^2(\mathbb{R}^2,\mathbb{H})$, then the Cauchy-Schwarz inequality holds[25]
\begin{align}
	\left| \langle f,g\rangle_{L^2(\mathbb{R}^2,\mathbb{H})}\right|^2\leq \|f\|_{L^2(\mathbb{R}^2,\mathbb{H})}^2\|g\|_{L^2(\mathbb{R}^2,\mathbb{H})} ^2
\end{align}
If and only if $f=-\lambda g$ for some quaternionic parameter $\lambda \in\mathbb{H}$, the equality holds. \end{lem}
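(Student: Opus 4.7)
The plan is to mimic the classical complex-case argument but to track the side on which every quaternion scalar is multiplied, since non-commutativity makes the usual $\lambda \overline{\lambda}=\overline{\lambda}\lambda$ manipulations delicate. First I would dispose of the trivial case $g=0$, where both sides vanish. For $g\neq 0$ I would introduce the one-parameter family $h_{\lambda}:=f+\lambda g$ with $\lambda\in\mathbb{H}$ and exploit the positivity $\|h_{\lambda}\|_{L^{2}(\mathbb{R}^{2},\mathbb{H})}^{2}\geq 0$ coming from the definition in (9).

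The next step is to expand $\langle f+\lambda g,f+\lambda g\rangle_{L^{2}(\mathbb{R}^{2},\mathbb{H})}$ using the integral definition (7). Because the inner product is conjugate-linear in the second argument via right multiplication by $\overline{\lambda}$, while it is linear in the first argument via left multiplication by $\lambda$, I would carefully record
\begin{align*}
\|h_{\lambda}\|^{2}=\|f\|^{2}+\langle f,g\rangle\,\overline{\lambda}+\lambda\,\langle g,f\rangle+\lambda\,\|g\|^{2}\,\overline{\lambda},
\end{align*}
pulling the constant quaternion $\lambda$ (respectively $\overline{\lambda}$) outside the real integral as is allowed. Writing $c:=\langle f,g\rangle_{L^{2}(\mathbb{R}^{2},\mathbb{H})}$, noting that $\langle g,f\rangle=\overline{c}$, and choosing the specific $\lambda=-c/\|g\|^{2}$, the cross terms collapse using $c\overline{c}=|c|^{2}\in\mathbb{R}$ and yield the real quantity
\begin{align*}
0\leq \|h_{\lambda}\|^{2}=\|f\|^{2}-\frac{|\langle f,g\rangle|^{2}}{\|g\|^{2}},
\end{align*}
which rearranges to the desired inequality. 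This is the step that requires the most care: one must check that the apparently quaternion-valued cross-term $c\overline{\lambda}+\lambda\overline{c}$ is actually twice the scalar part $2[c\overline{\lambda}]_{0}$, and that the chosen $\lambda$ makes this scalar part negative enough to cancel half of $|\lambda|^{2}\|g\|^{2}$.

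For the equality characterization I would argue in two directions. If $f=-\lambda g$ almost everywhere for some $\lambda\in\mathbb{H}$, a direct substitution combined with the norm identity $|qp|=|q||p|$ from (5) gives $|\langle f,g\rangle|^{2}=|\lambda|^{2}\|g\|^{4}=\|f\|^{2}\|g\|^{2}$. Conversely, if equality holds in the inequality then the right-hand side above is zero for our choice of $\lambda$, and the non-negativity of the integrand in (9) forces $f(\mathbf{x})+\lambda g(\mathbf{x})=0$ almost everywhere, recovering the stated proportionality relation. The main obstacle throughout is purely bookkeeping on which side a quaternion constant sits; once the convention is fixed consistently with (7) the rest is a routine transcription of the standard proof.
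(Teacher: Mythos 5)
Your argument is correct, and it is worth noting that the paper itself gives no proof of this lemma at all: it is simply quoted from reference [25], so there is nothing internal to compare against and your write-up is a genuine self-contained supplement. Your route is the standard quadratic trick, correctly adapted to the non-commutative setting: with the inner product (7) the expansion of $\|f+\lambda g\|^{2}$ does produce $\langle f,g\rangle\overline{\lambda}+\lambda\langle g,f\rangle+\lambda\|g\|^{2}\overline{\lambda}$, the last term is $|\lambda|^{2}\|g\|^{2}$ because $\|g\|^{2}$ is a real scalar, the cross term is $2[c\overline{\lambda}]_{0}$ since $\lambda\overline{c}=\overline{c\overline{\lambda}}$, and the choice $\lambda=-c/\|g\|^{2}$ (division by a positive real, hence unambiguous) collapses everything to $\|f\|^{2}-|c|^{2}/\|g\|^{2}\geq 0$, which is exactly the inequality; the placement of $\lambda$ on the left in $h_{\lambda}=f+\lambda g$ is also consistent with the stated equality condition $f=-\lambda g$, and both directions of your equality argument check out. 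The only blemish is the degenerate case in the equality characterization: if $g=0$ and $f\neq 0$ then equality holds trivially but $f=-\lambda g$ is impossible, so the "if and only if" as literally stated fails there; this is a defect of the lemma as the paper states it (the usual formulation requires $g\neq 0$ or speaks of linear dependence), not of your proof, but since your converse divides by $\|g\|^{2}$ you should state explicitly that the characterization is asserted for $g\neq 0$.
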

\noindent  {\bf 2.2 The quaternion linear canonical transform}

The QLCT is firstly defined by Kou, et., which  is a generalization of the LCT in the frame of quaternion algebra[16]. Due to the non-commutativity of quaternion multiplication, there are three different types of the QLCT: the left-sided QLCT, the right-sided QLCT, and the two-sided QLCT. In this paper, we mainly focus on the two-sided QLCT.
\begin{defn} Let
	$A_i=\begin{bmatrix}
	a_i&b_i\\
	c_i&d_i
	\end{bmatrix}\in \mathbb{R}^{2\times 2}$ be a matrix parameter satisfying ${\rm det}(A_i)=1$, for $i=1,2$. The two-sided QLCT of signals $f\in L^2\left( \mathbb{R}^2,\mathbb{H}\right)$ is defined by
	\begin{align}
	\label{dQLCT}
	\mathcal{L}_{A_1,A_2}^{\mathbb{H}}\{f\}(\mathbf{w})=
\int_{\mathbb{R}^2}K_{A_1}^{\mathbf{i}}(x_1,\omega_1)f(\mathbf{x})K_{A_2}^{\mathbf{j}}(x_2,\omega_2)\rm{d}\mathbf{x}
	\end{align}	
	where $\mathbf{w}=(\omega_1,\omega_2)$ is regarded as the QLCT domain, and  the kernel signals $K_{A_1}^{\mathbf{i}}(x_1,\omega_1)$, $K_{A_2}^{\mathbf{j}}(x_2,\omega_2)$ are respectively given by
	\begin{align}
		\begin{split}
			K_{A_1}^{\mathbf{i}}(x_1,\omega_1):=\begin{cases}
		\frac{1}{\sqrt{2\pi b_1}}e^{\mathbf{i}\left(\frac{a_1}{2b_1}x_1^2-\frac{x_1\omega_1}{b_1}+\frac{d_1}{2b_1}\omega_1^2-\frac{\pi}{4} \right) },   &b_1\neq0  \\
		\sqrt{d_1}e^{\mathbf{i}\frac{c_1d_1}{2}\omega_1^2}\delta(x_{1}-d_{1}w_{1}),    &b_1=0
		\end{cases}
		\end{split}
	\end{align}
	and
	\begin{align}
		\begin{split}
			K_{A_2}^{\mathbf{j}}(x_2,\omega_2):=\begin{cases}
		\frac{1}{\sqrt{2\pi b_2}}e^{\mathbf{j}\left(\frac{a_2}{2b_2}x_2^2-\frac{x_2\omega_2}{b_2}+\frac{d_2}{2b_2}\omega_2^2-\frac{\pi}{4} \right) },   &b_2\neq0  \\
		\sqrt{d_2}e^{\mathbf{j}\frac{c_2d_2}{2}\omega_2^2}\delta(x_{2}-d_{2}w_{2}),    &b_2=0
		\end{cases}
		\end{split}
	\end{align}
\end{defn}
From above definition, it is noted that for $b_i=0,i=1,2$ the QLCT of a signal is essentially a chirp multiplication and it is of no particular interest for our objective in this work. Hence, without loss of generality, we set $b_i\neq0$ in the following section unless stated otherwise. Under some suitable conditions, the QLCT above is invertible and the inversion is given in the following section.
\begin{defn} Suppose $f\in L^2\left( \mathbb{R}^2,\mathbb{H}\right)$, then the inversion of the QLCT of $f$ is given by
	\begin{align}
		\begin{split}
		\label{eIQLCT}
		f(\mathbf{x})&=\mathcal{L}_{A_1,A_2}^{-1}[\mathcal{L}_{A_1,A_2}^{\mathbb{H}}\{f\}](\mathbf{x})
=\int_{\mathbb{R}^2}K_{A_1}^{-\mathbf{i}}(x_1,\omega_1)
\mathcal{L}_{A_1,A_2}^{\mathbb{H}}\left\{f\right\}(\mathbf{w})K_{A_2}^{-\mathbf{j}}(x_2,\omega_2)\rm{d}\mathbf{w}\\
		\end{split}
	\end{align}	
\end{defn}
The Parseval formula of QLCT is vary available as follows:
\begin{lem} [QLCT Parseval][17]
 Two quaternion functions $f,g\in L^2(\mathbb{R}^2,\mathbb{H})$ are related to their QLCT via
the Parseval formula, given as
\begin{align}
		\begin{split}
       \langle f,g\rangle_{L^2(\mathbb{R}^2,\mathbb{H})}=\langle \mathcal{L}_{A_1,A_2}^{\mathbb{H}}\{f\}, \mathcal{L}_{A_1,A_2}^{\mathbb{H}}\{g\}\rangle_{L^2(\mathbb{R}^2,\mathbb{H})}
        \end{split}
	\end{align}
For $f=g$, one have
\begin{align}
		\begin{split}
\| f\|^{2}_{L^2(\mathbb{R}^2,\mathbb{H})}=\|\mathcal{L}_{A_1,A_2}^{\mathbb{H}}\{f\}\|^{2}_{L^2(\mathbb{R}^2,\mathbb{H})}
        \end{split}
	\end{align}
\end{lem}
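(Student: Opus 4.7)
The plan is to start from the right-hand side of (15), unfold the two QLCTs according to Definition 2, and collapse the resulting iterated integral by exploiting the orthogonality of the QLCT kernels implicit in the inversion formula (14).

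First I would write
\begin{align*}
\langle \mathcal{L}^{\mathbb{H}}_{A_1,A_2}\{f\},\mathcal{L}^{\mathbb{H}}_{A_1,A_2}\{g\}\rangle_{L^2(\mathbb{R}^2,\mathbb{H})}=\int_{\mathbb{R}^2} \mathcal{L}^{\mathbb{H}}_{A_1,A_2}\{f\}(\mathbf{w})\,\overline{\mathcal{L}^{\mathbb{H}}_{A_1,A_2}\{g\}(\mathbf{w})}\,d\mathbf{w},
\end{align*}
substitute (11) for each transform, and use the reversal rule $\overline{pqr}=\bar r\bar q\bar p$ together with the pointwise identities $\overline{K^{\mathbf{i}}_{A_1}(y_1,\omega_1)}=K^{-\mathbf{i}}_{A_1}(y_1,\omega_1)$ and $\overline{K^{\mathbf{j}}_{A_2}(y_2,\omega_2)}=K^{-\mathbf{j}}_{A_2}(y_2,\omega_2)$ to rewrite the conjugated transform. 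This yields a sextuple integral in $(\mathbf{x},\mathbf{y},\mathbf{w})$ whose integrand is $K^{\mathbf{i}}_{A_1}(x_1,\omega_1)\,f(\mathbf{x})\,K^{\mathbf{j}}_{A_2}(x_2,\omega_2)\,K^{-\mathbf{j}}_{A_2}(y_2,\omega_2)\,\overline{g(\mathbf{y})}\,K^{-\mathbf{i}}_{A_1}(y_1,\omega_1)$.

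Next, Fubini (justified by $f,g\in L^2$) allows me to carry out the $\omega_2$-integration first. A direct Fourier-type computation, essentially a restriction of (14) to the $\mathbf{j}$-phase, yields the kernel-orthogonality relation
\begin{align*}
\int_{\mathbb{R}} K^{\mathbf{j}}_{A_2}(x_2,\omega_2)\,K^{-\mathbf{j}}_{A_2}(y_2,\omega_2)\,d\omega_2 = \delta(x_2-y_2),
\end{align*}
so integration against $y_2$ collapses that pair of variables and replaces $\overline{g(\mathbf{y})}$ by $\overline{g(y_1,x_2)}$. I would then handle the $\omega_1$-integration by the analogous $\mathbf{i}$-phase identity to produce $\delta(x_1-y_1)$, which leaves the single integral $\int f(\mathbf{x})\overline{g(\mathbf{x})}\,d\mathbf{x}=\langle f,g\rangle_{L^2(\mathbb{R}^2,\mathbb{H})}$, and (16) follows by specializing to $f=g$.

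The main obstacle is the order of operations in the $\omega_1$-step: the $\mathbf{i}$-valued kernels $K^{\mathbf{i}}_{A_1}(x_1,\omega_1)$ and $K^{-\mathbf{i}}_{A_1}(y_1,\omega_1)$ now sandwich the quaternion-valued remainder $f(x_1,x_2)\overline{g(y_1,x_2)}$, which need not commute with $\mathbf{i}$. I would resolve this by decomposing the remainder along $\mathbb{C}_{\mathbf{i}}\oplus\mathbb{C}_{\mathbf{i}}\mathbf{j}$ as $h_1+h_2\mathbf{j}$ with $h_1,h_2\in\mathbb{C}_{\mathbf{i}}$ and applying the anti-commutation rule $e^{\mathbf{i}\theta}\mathbf{j}=\mathbf{j}e^{-\mathbf{i}\theta}$ to push the $\mathbf{j}$-factor past $K^{-\mathbf{i}}_{A_1}$. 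On the $h_1$-piece the two $\mathbf{i}$-kernels collapse to $\delta(x_1-y_1)$ exactly as in the commutative case; on the $h_2$-piece the same reduction, as carried out in [17], restores the delta and completes the identification with $\langle f,g\rangle_{L^2(\mathbb{R}^2,\mathbb{H})}$.
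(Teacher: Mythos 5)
The paper never proves this lemma at all --- it is quoted from [17] --- so the only question is whether your argument stands on its own, and it does not: the last step fails. Everything up through the $\omega_2$-integration is fine: in the sextuple integral the two $\mathbf{j}$-kernels sit adjacent, $\int_{\mathbb{R}}K^{\mathbf{j}}_{A_2}(x_2,\omega_2)K^{-\mathbf{j}}_{A_2}(y_2,\omega_2)\,d\omega_2=\delta(x_2-y_2)$, and you are left with $K^{\mathbf{i}}_{A_1}(x_1,\omega_1)\,h\,K^{-\mathbf{i}}_{A_1}(y_1,\omega_1)$, where $h=f(x_1,x_2)\overline{g(y_1,x_2)}=h_1+h_2\mathbf{j}$, $h_1,h_2\in\mathbb{C}_{\mathbf{i}}$. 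The $h_1$-piece collapses to $h_1\,\delta(x_1-y_1)$ as you say. But on the $h_2$-piece, pushing $\mathbf{j}$ through $K^{-\mathbf{i}}_{A_1}(y_1,\omega_1)$ flips the sign of its $\mathbf{i}$-phase, so the $\omega_1$-integral you actually face is
\begin{align*}
\int_{\mathbb{R}}K^{\mathbf{i}}_{A_1}(x_1,\omega_1)K^{\mathbf{i}}_{A_1}(y_1,\omega_1)\,d\omega_1
=\frac{1}{2\pi b_1}\int_{\mathbb{R}}e^{\mathbf{i}\left(\frac{a_1}{2b_1}(x_1^2+y_1^2)-\frac{(x_1+y_1)\omega_1}{b_1}+\frac{d_1}{b_1}\omega_1^2-\frac{\pi}{2}\right)}\,d\omega_1,
\end{align*}
a Fresnel integral whose value is a chirp in $x_1+y_1$ (a multiple of $\delta(x_1+y_1)$ when $d_1=0$), never $\delta(x_1-y_1)$. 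So the $h_2$-component is not brought back to the diagonal, and no appeal to [17] can repair this, because the full quaternion-valued identity (15) is in fact false for a two-sided transform: with Fourier-type parameters, take $g=\phi$ and $f=\psi\,\mathbf{j}$ with $\phi,\psi$ real and supported in $x_1>0$; then $\langle f,g\rangle=\left(\int\psi\phi\,{\rm d}\mathbf{x}\right)\mathbf{j}\neq0$, while the computation above yields $\left(\int\psi(x_1,x_2)\phi(-x_1,x_2)\,{\rm d}\mathbf{x}\right)\mathbf{j}=0$.

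What your computation does establish is that the stray $h_2$-term lies in $\operatorname{span}\{\mathbf{j},\mathbf{k}\}$, so the $\{1,\mathbf{i}\}$-components of the two inner products agree; in particular the symmetric real scalar part (8) is preserved, and setting $f=g$ gives the Plancherel identity (16). That scalar-part/norm version is the statement that is actually true for the two-sided QLCT, and it is all the paper ever uses later (only the norm identity enters, e.g., in (50)). So either restate the lemma at that level --- in which case your route, cleaned up (distributional use of $\delta$, and the usual care with $\sqrt{2\pi b_k}$ when $b_k<0$), goes through --- or acknowledge that the identity as displayed cannot be obtained by this argument.
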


\noindent{\bf 3. Quaternionic Windowed Linear Canonical Transform (QWLCT)}

In this section, the generalization of window function associated with the QLCT will be discussed, which is denoted by QWLCT. Moreover, several basic
properties of them are investigated.

\medskip

\noindent{\bf 3.1. The definition of the 2D QWLCT}

This section leads to the 2D quaternion window function associated with the QLCT. Due to the noncommutative property of
multiplication of quaternions, there are three different types of
QWLCTs: Two-sided QWLCT, Left-sided QWLCT and Right-sided QWLCT. Alternatively, we use the (Two-sided)
QWLCT to define the QWLCT.

\noindent  {\bf Definition 3.1 (QWLCT)}
	Let
	$A_i=\begin{bmatrix}
	a_i&b_i\\
	c_i&d_i
	\end{bmatrix}\in \mathbb{R}^{2\times 2}$ be a matrix parameter satisfying ${\rm det}(A_i)=1$, for $i=1,2$. $\phi\in L^2(\mathbb{R}^2,\mathbb{H})\backslash \{0\}$ be a quaternion window function. The two-sided QWLCT of a signal $f\in L^2\left( \mathbb{R}^2,\mathbb{H}\right)$ with respect to $\phi$ is defined by
	\begin{align}
	\label{dQLCT}
	G^{A_{1},A_{2}}_{\phi}f(\mathbf{w,u})&=\int_{\mathbb{R}^2}K_{A_1}^{\mathbf{i}}(x_1,\omega_1)f(\mathbf{x}) \overline{\phi(\mathbf{x-u})} K_{A_2}^{\mathbf{j}}(x_2,\omega_2)\rm{d}\mathbf{x}
	\end{align}	
where $\mathbf{u}=(u_1,u_2)\in\mathbb{R}^2$, $K_{A_1}^{\mathbf{i}}(x_1,\omega_1)$ and $K_{A_2}^{\mathbf{j}}(x_2,\omega_2)$ are given by (12) and (13), respectively.

For a fixed $\mathbf{u}$, we have
\begin{align}
		\begin{split}
		G^{A_{1},A_{2}}_{\phi}f(\mathbf{w,u})=\mathcal{L}_{A_1,A_2}^{\mathbb{H}}
{\{f(\mathbf{x})\overline{\phi(\mathbf{x-u})}\}(\mathbf{w})} \\
		\end{split}
	\end{align}
Applying the inverse QLCT to (18), we have
\begin{align}
		\begin{split}
		f(\mathbf{x})\overline{\phi(\mathbf{x-u})}&=\mathcal{L}_{A_1,A_2}^{-1}\{G^{A_1,A_2}_{\phi}f(\mathbf{w,u})\} =\int_{\mathbb{R}^2}\overline{K_{A_1}^{\mathbf{i}}(x_1,\omega_1)}
G^{A_1,A_2}_{\phi}\{f\}(\mathbf{w,u})\overline{K_{A_2}^{\mathbf{j}}(x_2,\omega_2)} \rm{d}\mathbf{w}
		\end{split}
	\end{align}
\medskip

\noindent{\bf 3.2. Some properties of QWLCT}
\bigskip

In this subsection, we discuss several basic properties of the QWLCT. These properties play important roles in signal representation.
\begin{thm}[Boundedness] Let $\phi\in L^2(\mathbb{R}^2,\mathbb{H})\backslash \{0\}$ be a window function and $f\in L^2\left( \mathbb{R}^2,\mathbb{H}\right)$, then
\begin{align}
		\begin{split}
		|G^{A_1,A_2}_{\phi}f(\mathbf{w,u})|\leq \frac{1}{2\pi\sqrt{|b_{1}b_{2}|}} \|f\|_{L^{2}(\mathbb{R}^2)} \|\phi\|_{L^{2}(\mathbb{R}^2)}\\
		\end{split}
	\end{align} \end{thm}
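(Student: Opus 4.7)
The plan is to estimate $|G^{A_1,A_2}_{\phi}f(\mathbf{w,u})|$ directly from the integral definition (17), pulling the modulus inside the integral and exploiting the explicit form of the QLCT kernels together with Cauchy--Schwarz.

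First, I would apply the triangle inequality for integrals to obtain
\[
\bigl|G^{A_1,A_2}_{\phi}f(\mathbf{w,u})\bigr|
\;\leq\; \int_{\mathbb{R}^2}
\bigl|K_{A_1}^{\mathbf{i}}(x_1,\omega_1)\bigr|\,
\bigl|f(\mathbf{x})\bigr|\,
\bigl|\overline{\phi(\mathbf{x-u})}\bigr|\,
\bigl|K_{A_2}^{\mathbf{j}}(x_2,\omega_2)\bigr|\,\mathrm{d}\mathbf{x},
\]
where I use the multiplicativity of the quaternion modulus from (5), $|pq|=|p||q|$, to split the product of four quaternions (three of which appear in a non-commutative order, but this is irrelevant for the modulus), and the fact that $|\overline{\phi}| = |\phi|$.

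Next, I would read off from the definitions (12) and (13) that, in the case $b_1,b_2\neq 0$ adopted throughout the paper, the kernels have constant modulus
\[
\bigl|K_{A_1}^{\mathbf{i}}(x_1,\omega_1)\bigr| = \frac{1}{\sqrt{2\pi|b_1|}},\qquad
\bigl|K_{A_2}^{\mathbf{j}}(x_2,\omega_2)\bigr| = \frac{1}{\sqrt{2\pi|b_2|}},
\]
so these factors can be pulled out of the integral, yielding
\[
\bigl|G^{A_1,A_2}_{\phi}f(\mathbf{w,u})\bigr|
\;\leq\; \frac{1}{2\pi\sqrt{|b_1 b_2|}}
\int_{\mathbb{R}^2}\bigl|f(\mathbf{x})\bigr|\,\bigl|\phi(\mathbf{x-u})\bigr|\,\mathrm{d}\mathbf{x}.
\]

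Finally, I would apply the Cauchy--Schwarz inequality (Lemma 1, applied to the real-valued functions $|f|$ and $|\phi(\cdot - \mathbf{u})|$) to bound the remaining integral by $\|f\|_{L^2(\mathbb{R}^2,\mathbb{H})}\,\|\phi(\cdot-\mathbf{u})\|_{L^2(\mathbb{R}^2,\mathbb{H})}$, and then invoke translation invariance of the Lebesgue measure on $\mathbb{R}^2$ to replace $\|\phi(\cdot-\mathbf{u})\|_{L^2}$ with $\|\phi\|_{L^2}$, giving the claimed bound.

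Honestly there is no real obstacle here: the proof is essentially three lines once one observes that the quaternion non-commutativity is neutralized by taking moduli via (5), and that the QLCT kernels have constant modulus $1/\sqrt{2\pi|b_i|}$. The only mild points worth stating carefully are the use of $|\overline{q}|=|q|$ and the translation invariance of the $L^2$ norm of $\phi$.
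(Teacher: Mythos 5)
Your proposal is correct and follows essentially the same route as the paper's own proof: triangle inequality under the integral, the constant modulus $1/\sqrt{2\pi|b_i|}$ of the QLCT kernels, Cauchy--Schwarz on $|f|\,|\phi(\cdot-\mathbf{u})|$, and a change of variables (translation invariance) to recover $\|\phi\|_{L^2}$. The only cosmetic difference is that the paper works with the squared modulus and takes a square root at the end, whereas you estimate the modulus directly.
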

\begin{proof} By using the quaternion Cauchy-Schwarz inequality (10)
\begin{align}
		\begin{split}
		|G^{A_1,A_2}_{\phi}f(\mathbf{w,u})|^{2}&= \left|\int_{\mathbb{R}^2}K_{A_1}^{\mathbf{i}}(x_1,\omega_1)f(\mathbf{x}) \overline{\phi(\mathbf{x-u})}K_{A_2}^{\mathbf{j}}(x_2,\omega_2)\rm{d}\mathbf{x}\right| ^2\\
&\leq \left( \int_{\mathbb{R}^2} \left| K_{A_1}^{\mathbf{i}}(x_1,\omega_1)f(\mathbf{x}) \overline{\phi(\mathbf{x-u})}K_{A_2}^{\mathbf{j}}(x_2,\omega_2)\right| \rm{d}\mathbf{x}\right) ^2 \\
&=\left( \frac{1}{\sqrt{4\pi^{2}|b_{1}b_{2}|}}\int_{\mathbb{R}^2} \left| f(\mathbf{x}) \overline{\phi(\mathbf{x-u})}\right| \rm{d}\mathbf{x}\right) ^2 \\
&\leq \frac{1}{4\pi^{2}|b_{1}b_{2}|}\left(\int_{\mathbb{R}^2} \left| f(\mathbf{x})\right|^{2}\rm{d}\mathbf{x}\right)\left(\int_{\mathbb{R}^2} \left| \overline{\phi(\mathbf{x-u})}\right|^{2} \rm{d}\mathbf{x}\right)\\
&=\frac{1}{4\pi^{2}|b_{1}b_{2}|} \|f\|_{L^{2}(\mathbb{R}^2)}^{2} \|\phi\|_{L^{2}(\mathbb{R}^2)}^{2}\\
		\end{split}
	\end{align}
where applying the change of variables $\mathbf{x-u}=\mathbf{t}$ in the last step. Then we have
\begin{align}
		\begin{split}
		|G^{A_1,A_2}_{\phi}f(\mathbf{w,u})|\leq \frac{1}{2\pi\sqrt{|b_{1}b_{2}|}} \|f\|_{L^{2}(\mathbb{R}^2)} \|\phi\|_{L^{2}(\mathbb{R}^2)}\\
		\end{split}
	\end{align}
which completes the proof.\end{proof}
\begin{thm}[Linearity] Let $\phi\in L^2(\mathbb{R}^2,\mathbb{H})\backslash \{0\}$ be a window function and $f,g \in L^2\left( \mathbb{R}^2,\mathbb{H}\right)$, the QWLCT is a linear operator, namely,
  \begin{align}
        \begin{split}
		[G^{A_1,A_2}_{\phi}(\lambda f+\mu g)](\mathbf{w,u})= \lambda G^{A_1,A_2}_{\phi}f(\mathbf{w,u})+\mu G^{A_1,A_2}_{\phi}g(\mathbf{w,u})\\
		\end{split}
	\end{align}
for arbitrary constants $\lambda$ and $\mu$. \end{thm}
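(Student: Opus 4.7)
The plan is to compute the left-hand side directly from Definition 3.1 and reduce it to the right-hand side using the distributive law of quaternion multiplication over addition together with the additivity of the integral. Since the QWLCT sandwiches the signal between the kernel factors $K_{A_1}^{\mathbf{i}}(x_1,\omega_1)$ on the left and $K_{A_2}^{\mathbf{j}}(x_2,\omega_2)$ on the right, the proof is essentially a bookkeeping exercise that must respect the non-commutativity of $\mathbb{H}$.

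First I would write out $(\lambda f+\mu g)(\mathbf{x})=\lambda f(\mathbf{x})+\mu g(\mathbf{x})$ pointwise and substitute into the defining integral (17). Using the left-right distributivity of quaternion multiplication, the integrand splits as
\begin{align*}
K_{A_1}^{\mathbf{i}}(x_1,\omega_1)\,\lambda f(\mathbf{x})\,\overline{\phi(\mathbf{x-u})}\,K_{A_2}^{\mathbf{j}}(x_2,\omega_2)
+K_{A_1}^{\mathbf{i}}(x_1,\omega_1)\,\mu g(\mathbf{x})\,\overline{\phi(\mathbf{x-u})}\,K_{A_2}^{\mathbf{j}}(x_2,\omega_2),
\end{align*}
and the additivity of the quaternion-valued Lebesgue integral then separates this into the sum of two integrals.

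Next I would extract the scalars $\lambda$ and $\mu$ from the interior of each integrand to the leftmost position, thereby identifying the two integrals as $\lambda\,G^{A_1,A_2}_{\phi}f(\mathbf{w,u})$ and $\mu\,G^{A_1,A_2}_{\phi}g(\mathbf{w,u})$ respectively, which is exactly the claimed identity.

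The one point deserving care, and the only genuine obstacle, is precisely this extraction step: because $\mathbb{H}$ is non-commutative and $K_{A_1}^{\mathbf{i}}$ involves $\mathbf{i}$, one cannot in general move a generic quaternion past the left kernel. This is resolved by reading \emph{constants} in the statement as real scalars (the standard convention in the quaternion-kernel literature), since real numbers lie in the center of $\mathbb{H}$ and hence commute with both $K_{A_1}^{\mathbf{i}}$ and $K_{A_2}^{\mathbf{j}}$. Once $\lambda,\mu\in\mathbb{R}$ is assumed, the extraction is immediate and the theorem follows.
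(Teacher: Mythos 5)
Your proof is correct and takes essentially the same route as the paper, whose entire proof is the one-line remark that the result ``follows directly from the linearity of the product and the integration involved in Definition 3.1.'' Your added caution about reading $\lambda,\mu$ as real scalars is in fact a point the paper glosses over: since a general quaternionic $\lambda$ cannot be pulled to the left past $K_{A_1}^{\mathbf{i}}(x_1,\omega_1)$, the identity as stated requires constants commuting with the kernels, even though the paper's Table~1 asserts $\lambda,\mu\in\mathbb{H}$ arbitrary.
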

\begin{proof} This follows directly from the linearity of the product and the integration
involved in Definition 3.1.\end{proof}
\begin{thm}[Parity]
Let $\phi\in L^2(\mathbb{R}^2,\mathbb{H})\backslash \{0\}$ be a window function and $f \in L^2\left( \mathbb{R}^2,\mathbb{H}\right)$. Then we have
\begin{align}
        \begin{split}
		G^{A_1,A_2}_{P\phi}\{P f\}(\mathbf{w,u})= G^{A_1,A_2}_{\phi}f(\mathbf{-w,-u})\\
		\end{split}
	\end{align}
where $P\phi(\mathbf{x})=\phi(-\mathbf{x})$ for every $\phi\in L^2(\mathbb{R}^2)$. \end{thm}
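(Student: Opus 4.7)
The plan is to reduce the identity to the definition of the QWLCT by performing a change of variables and then exploiting a sign symmetry of the QLCT kernels. First I would start from the left-hand side and unfold both occurrences of the parity operator: substituting $(Pf)(\mathbf{x})=f(-\mathbf{x})$ and $(P\phi)(\mathbf{x}-\mathbf{u})=\phi(-\mathbf{x}+\mathbf{u})$ into Definition 3.1 gives
\begin{equation*}
G^{A_1,A_2}_{P\phi}\{Pf\}(\mathbf{w},\mathbf{u}) = \int_{\mathbb{R}^2} K_{A_1}^{\mathbf{i}}(x_1,\omega_1)\, f(-\mathbf{x})\, \overline{\phi(-\mathbf{x}+\mathbf{u})}\, K_{A_2}^{\mathbf{j}}(x_2,\omega_2)\, \mathrm{d}\mathbf{x}.
\end{equation*}

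Next I would perform the change of variables $\mathbf{y}=-\mathbf{x}$, whose Jacobian is $1$ on $\mathbb{R}^2$. This replaces $f(-\mathbf{x})$ with $f(\mathbf{y})$ and $\overline{\phi(-\mathbf{x}+\mathbf{u})}$ with $\overline{\phi(\mathbf{y}-(-\mathbf{u}))}$, while the kernels become $K_{A_1}^{\mathbf{i}}(-y_1,\omega_1)$ and $K_{A_2}^{\mathbf{j}}(-y_2,\omega_2)$. The window argument now already matches the pattern required on the right-hand side at the translation point $-\mathbf{u}$, so the whole proof reduces to handling the two kernels.

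The core step, and the only thing that is not purely notational, is the kernel symmetry
\begin{equation*}
K_{A_1}^{\mathbf{i}}(-y_1,\omega_1)=K_{A_1}^{\mathbf{i}}(y_1,-\omega_1), \qquad K_{A_2}^{\mathbf{j}}(-y_2,\omega_2)=K_{A_2}^{\mathbf{j}}(y_2,-\omega_2).
\end{equation*}
I would verify this directly from the explicit forms (12) and (13): the phase $\tfrac{a_i}{2b_i}x_i^2-\tfrac{x_i\omega_i}{b_i}+\tfrac{d_i}{2b_i}\omega_i^2-\tfrac{\pi}{4}$ is even in $x_i^2$ and $\omega_i^2$, while the cross term $-\tfrac{x_i\omega_i}{b_i}$ is invariant under the simultaneous sign change $(x_i,\omega_i)\mapsto(-x_i,-\omega_i)$ and in particular flips the same way under $x_i\mapsto -x_i$ as under $\omega_i\mapsto-\omega_i$. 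Substituting these identities into the integral produces exactly the QWLCT of $f$ with window $\phi$ evaluated at $(-\mathbf{w},-\mathbf{u})$.

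I do not anticipate a real obstacle here: the quaternionic non-commutativity never forces us to move any factor past another because the order $K_{A_1}^{\mathbf{i}}\cdot f\cdot\overline{\phi}\cdot K_{A_2}^{\mathbf{j}}$ is preserved throughout; only the arguments change. The one point deserving explicit care is that the imaginary units $\mathbf{i}$ and $\mathbf{j}$ in the two kernels are \emph{not} conjugated during the sign flip, which is exactly why the kernel symmetry above (and not a conjugation identity) is the right tool.
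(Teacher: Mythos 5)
Your proof is correct and follows essentially the same route as the paper: both rest on the kernel phase being invariant under the simultaneous sign flip $(x_i,\omega_i)\mapsto(-x_i,-\omega_i)$ together with the substitution $\mathbf{y}=-\mathbf{x}$ to identify the result as $G^{A_1,A_2}_{\phi}f(-\mathbf{w},-\mathbf{u})$. Your write-up is in fact slightly more explicit than the paper's, which leaves the final change of variables implicit after rewriting the kernels.
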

\begin{proof} A direct calculation gives, for every $f \in L^2\left( \mathbb{R}^2,\mathbb{H}\right)$,
\begin{align}
        \begin{split}
		G^{A_1,A_2}_{P\phi}\{P f\}(\mathbf{w,u})&= \int_{\mathbb{R}^2}K_{A_1}^{\mathbf{i}}(x_1,\omega_1)f(\mathbf{-x}) \overline{\phi(\mathbf{-(x-u)})} K_{A_2}^{\mathbf{j}}(x_2,\omega_2)\rm{d}\mathbf{x}\\
&=\int_{\mathbb{R}^2}\frac{1}{\sqrt{2\pi b_1}}e^{\mathbf{i}\left(\frac{a_1}{2b_1}(-x_1)^2-\frac{(-x_1)(-\omega_1)}{b_1}+\frac{d_1}{2b_1}(-\omega_1)^2-\frac{\pi}{4} \right) }\\
&\times f(\mathbf{-x}) \overline{\phi(\mathbf{-(x-u)})}\frac{1}{\sqrt{2\pi b_2}}e^{\mathbf{j}\left(\frac{a_2}{2b_2}(-x_2)^2-\frac{(-x_2)(-\omega_2)}{b_2}+\frac{d_2}{2b_2}(-\omega_2)^2-\frac{\pi}{4} \right) }\rm{d}\mathbf{x}\\
		\end{split}
	\end{align}
which proves the theorem according to Definition 3.1.\end{proof}
\begin{thm}[Shift]
 Let $\phi\in L^2(\mathbb{R}^2,\mathbb{H})\backslash \{0\}$ be a window function and $f \in L^2\left( \mathbb{R}^2,\mathbb{H}\right)$. Then we have
\begin{align}
        \begin{split}
		G^{A_1,A_2}_{\phi}\{T_{\mathbf{r}}f\}(\mathbf{w,u})= e^{\mathbf{i}r_{1}\omega_{1}c_{1}}e^{-\mathbf{i}\frac{a_{1}r_{1}^{2}}{2}c_{1}}
G^{A_1,A_2}_{\phi}\{f\}(\mathbf{m,n}) e^{\mathbf{j}r_{2}\omega_{2}c_{2}}e^{-\mathbf{j}\frac{a_{2}r_{2}^{2}}{2}c_{2}}\\
		\end{split}
	\end{align}
where $T_{\mathbf{r}}f(\mathbf{x})=f(\mathbf{x-r}), \mathbf{r}=(r_{1},r_{2})$,
 $\mathbf{m}=(m_{1},m_{2})$, $ \mathbf{n}=(n_{1},n_{2})\in \mathbb{R}^2$, $ m_{i}=w_{i}-a_{i}r_{i} $, $ n_{i}=u_{i}-r_{i} $, $i=1,2$.\end{thm}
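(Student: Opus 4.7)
The plan is to reduce the theorem to the analogous shift identity for the classical LCT kernel and then be very careful with the placement of the resulting phase factors, since the non-commutativity of $\mathbb{H}$ means that an $\mathbf{i}$-exponential pulled out of the integrand must stay on the left while a $\mathbf{j}$-exponential must stay on the right.

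First I would start from Definition 3.1 applied to $T_{\mathbf{r}}f$:
\begin{align*}
G^{A_1,A_2}_{\phi}\{T_{\mathbf{r}}f\}(\mathbf{w,u})
= \int_{\mathbb{R}^2}K_{A_1}^{\mathbf{i}}(x_1,\omega_1)\, f(\mathbf{x-r})\, \overline{\phi(\mathbf{x-u})}\, K_{A_2}^{\mathbf{j}}(x_2,\omega_2)\,{\rm d}\mathbf{x},
\end{align*}
and then make the change of variable $\mathbf{t}=\mathbf{x-r}$. This turns $\overline{\phi(\mathbf{x-u})}$ into $\overline{\phi(\mathbf{t}-(\mathbf{u-r}))}=\overline{\phi(\mathbf{t-n})}$ with $\mathbf{n}=\mathbf{u-r}$, and it requires rewriting the kernels at the shifted argument.

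The key lemma, which I would verify by a direct algebraic expansion of the quadratic phase and the identity $a_i d_i - b_i c_i = 1$, is the shift identity for the canonical kernel:
\begin{align*}
K_{A_1}^{\mathbf{i}}(t_1+r_1,\omega_1) &= e^{\mathbf{i}\left(r_1 c_1 \omega_1 - \frac{a_1 r_1^2 c_1}{2}\right)} K_{A_1}^{\mathbf{i}}(t_1,\omega_1-a_1 r_1),\\
K_{A_2}^{\mathbf{j}}(t_2+r_2,\omega_2) &= K_{A_2}^{\mathbf{j}}(t_2,\omega_2-a_2 r_2)\, e^{\mathbf{j}\left(r_2 c_2 \omega_2 - \frac{a_2 r_2^2 c_2}{2}\right)}.
\end{align*}
These are just completed-square manipulations of the exponent; the cross-term $\tfrac{a_1 r_1 t_1}{b_1}$ produced by expanding $(t_1+r_1)^2$ is absorbed by shifting $\omega_1\mapsto\omega_1-a_1 r_1$ inside $K_{A_1}^{\mathbf{i}}(t_1,\cdot)$, and the leftover $r_1$-dependent constant is exactly $r_1 c_1 \omega_1 - \tfrac{a_1 r_1^2 c_1}{2}$ after using $d_1 a_1 - 1 = b_1 c_1$. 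Note that for the right-hand kernel I deliberately place the scalar $\mathbf{j}$-exponential on the right, because it is this position that survives the non-commutative manipulation below.

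Substituting these two identities into the transformed integral and writing $m_i = \omega_i - a_i r_i$, the $\mathbf{i}$-exponential (being independent of $\mathbf{t}$ and a pure $\mathbf{i}$-scalar) commutes past the integral only from the left, while the $\mathbf{j}$-exponential commutes past it only from the right; both may therefore be extracted as external factors, leaving
\begin{align*}
G^{A_1,A_2}_{\phi}\{T_{\mathbf{r}}f\}(\mathbf{w,u})
= e^{\mathbf{i}r_1 \omega_1 c_1}e^{-\mathbf{i}\frac{a_1 r_1^2}{2} c_1}\!\int_{\mathbb{R}^2}\! K_{A_1}^{\mathbf{i}}(t_1,m_1) f(\mathbf{t}) \overline{\phi(\mathbf{t-n})} K_{A_2}^{\mathbf{j}}(t_2,m_2){\rm d}\mathbf{t}\; e^{\mathbf{j}r_2\omega_2 c_2}e^{-\mathbf{j}\frac{a_2 r_2^2}{2}c_2}.
\end{align*}
The inner integral is exactly $G^{A_1,A_2}_{\phi}\{f\}(\mathbf{m,n})$ by Definition 3.1, which completes the proof.

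The main obstacle is the careful bookkeeping dictated by the non-commutativity of $\mathbb{H}$: one must never move an $\mathbf{i}$-phase past an $\mathbf{j}$-phase or past the quaternion-valued integrand $f(\mathbf{t})\overline{\phi(\mathbf{t-n})}$, and one must choose the correct side when writing the kernel-shift identity so that the constant phases end up on the outside in the order required by the theorem. Everything else is a completion-of-squares computation using $\det A_i = 1$.
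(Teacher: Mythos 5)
Your proposal is correct and follows essentially the same route as the paper's proof: a change of variables $\mathbf{t}=\mathbf{x}-\mathbf{r}$, completion of the square in the kernel exponent (your kernel-shift identity is exactly the paper's in-line computation, packaged as a lemma), simplification of the residual phase via $a_id_i-b_ic_i=1$, and extraction of the $\mathbf{i}$-phase on the left and the $\mathbf{j}$-phase on the right, respecting quaternion non-commutativity. No gaps to report.
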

\begin{proof} By Definition 3.1, we have
\begin{align}
        \begin{split}
		G^{A_1,A_2}_{\phi}\{T_{\mathbf{r}}f\}(\mathbf{w,u})= \int_{\mathbb{R}^2}K_{A_1}^{\mathbf{i}}(x_1,\omega_1)f(\mathbf{x-r}) \overline{\phi(\mathbf{x-u})} K_{A_2}^{\mathbf{j}}(x_2,\omega_2)\rm{d}\mathbf{x}\\
		\end{split}
	\end{align}
By making the change of variable $\mathbf{t}=\mathbf{x-r}$ in the above expression, we obtain
\begin{align}
        \begin{split}
		G^{A_1,A_2}_{\phi}\{T_{\mathbf{r}}f\}(\mathbf{w,u})&= \int_{\mathbb{R}^2}\frac{1}{\sqrt{2\pi b_1}}e^{\mathbf{i}\left(\frac{a_1}{2b_1}(r_1+t_{1})^2-\frac{(r_1+t_{1})
\omega_1}{b_1}+\frac{d_1}{2b_1}\omega_1^2-\frac{\pi}{4} \right) }
 f(\mathbf{t}) \overline{\phi(\mathbf{t-(u-r)})} \\
&\times\frac{1}{\sqrt{2\pi b_2}}e^{\mathbf{j}\left(\frac{a_2}{2b_2}(r_2+t_{2})^2-\frac{(r_2+t_{2})\omega_2}{b_2}+\frac{d_2}{2b_2}
\omega_2^2-\frac{\pi}{4} \right) }\rm{d}\mathbf{t}\\
&=\frac{1}{\sqrt{2\pi b_1}}e^{\mathbf{i}\frac{a_{1}}{2b_{1}}r_{1}^{2}-\mathbf{i}\frac{r_{1}\omega_{1}}{b_{1}}} \int_{\mathbb{R}^2}e^{\mathbf{i}\left(\frac{a_1}{2b_1}t_{1}^2-\frac{(\omega_{1}-r_{1}a_{1})t_1}{b_1}+\frac{d_1}{2b_1}
(\omega_1-r_{1}a_{1}+r_{1}a_{1})^2-\frac{\pi}{4}\right) }\\
&\times f(\mathbf{t}) \overline{\phi(\mathbf{t-(u-r)})}\frac{1}{\sqrt{2\pi b_2}} e^{\mathbf{j}\frac{a_{2}}{2b_{2}}r_{2}^{2}-\mathbf{j}\frac{r_{2}\omega_{2}}{b_{2}}}\rm{d}\mathbf{t}
e^{\mathbf{j}\left(\frac{a_2}{2b_2}t_{2}^2-\frac{(\omega_{2}-r_{2}a_{2})t_2}{b_2}+\frac{d_2}{2b_2}
(\omega_2-r_{2}a_{2}+r_{2}a_{2})^2-\frac{\pi}{4}\right) }\\
&=e^{\mathbf{i}\frac{d_{1}}{2b_{1}}(2r_{1}a_{1}(\omega_{1}-r_{1}a_{1})+(r_{1}a_{1})^{2})}
e^{\mathbf{i}\frac{a_{1}}{2b_{1}}r_{1}^{2}}e^{-\mathbf{i}\frac{r_{1}\omega_{1}}{b_{1}}}
\int_{\mathbb{R}^2}\frac{1}{\sqrt{2\pi b_1}}e^{\mathbf{i}\left(\frac{a_1}{2b_1}t_{1}^2-\frac{t_{1}(\omega_1-r_{1}a_{1})}{b_1}+\frac{d_1}{2b_1}
(\omega_1-r_{1}a_{1})^2-\frac{\pi}{4} \right) }\\
&\times f(\mathbf{t}) \overline{\phi(\mathbf{t-(u-r)})}\frac{1}{\sqrt{2\pi b_2}}e^{\mathbf{j}\left(\frac{a_2}{2b_2}t_{2}^2-\frac{t_{2}(\omega_2-r_{2}a_{2})}{b_2}+\frac{d_2}{2b_2}
(\omega_2-r_{2}a_{2})^2-\frac{\pi}{4} \right) }\rm{d}\mathbf{t}\\
&\times e^{\mathbf{j}\frac{d_{2}}{2b_{2}}(2r_{2}a_{2}(\omega_{2}-r_{2}a_{2})+(r_{2}a_{2})^{2})}
e^{\mathbf{j}\frac{a_{2}}{2b_{2}}r_{2}^{2}}e^{-\mathbf{j}\frac{r_{2}\omega_{2}}{b_{2}}}\\
		\end{split}
	\end{align}
Applying the definition of the QWLCT, the above expression can be rewritten in the form
\begin{align}
        \begin{split}
		G^{A_1,A_2}_{\phi}\{T_{\mathbf{r}}f\}(\mathbf{w,u})&=e^{\mathbf{i}\frac{d_{1}}{2b_{1}}(2r_{1}a_{1}
(\omega_{1}-r_{1}a_{1})+(r_{1}a_{1})^{2})}
e^{\mathbf{i}\frac{a_{1}}{2b_{1}}r_{1}^{2}}e^{-\mathbf{i}\frac{r_{1}\omega_{1}}{b_{1}}} G^{A_1,A_2}_{\phi}\{f\}(\mathbf{m,n})\\
&\times e^{\mathbf{j}\frac{d_{2}}{2b_{2}}(2r_{2}a_{2}(\omega_{2}-r_{2}a_{2})+(r_{2}a_{2})^{2})}
e^{\mathbf{j}\frac{a_{2}}{2b_{2}}r_{2}^{2}}e^{-\mathbf{j}\frac{r_{2}\omega_{2}}{b_{2}}}\\
\end{split}
	\end{align}
Because $a_{i}d_{i}-b_{i}c_{i}=1$, for $i=1,2$. We get
\begin{align}
        \begin{split}
		e^{\mathbf{i}\frac{d_{1}}{2b_{1}}(2r_{1}a_{1}(\omega_{1}-r_{1}a_{1})+(r_{1}a_{1})^{2})}
e^{\mathbf{i}\frac{a_{1}}{2b_{1}}r_{1}^{2}}e^{-\mathbf{i}\frac{r_{1}\omega_{1}}{b_{1}}}=
e^{\mathbf{i}r_{1}\omega_{1}c_{1}}e^{-\mathbf{i}\frac{a_{1}r_{1}^{2}}{2}c_{1}}\\
\end{split}
	\end{align}
\begin{align}
        \begin{split}
		e^{\mathbf{j}\frac{d_{2}}{2b_{2}}(2r_{2}a_{2}(\omega_{2}-r_{2}a_{2})+(r_{2}a_{2})^{2})}
e^{\mathbf{j}\frac{a_{2}}{2b_{2}}r_{2}^{2}}e^{-\mathbf{j}\frac{r_{2}\omega_{2}}{b_{2}}}=
e^{\mathbf{j}r_{2}\omega_{2}c_{2}}e^{-\mathbf{j}\frac{a_{2}r_{2}^{2}}{2}c_{2}}\\
\end{split}
	\end{align}
We finally arrive at
\begin{align}
        \begin{split}
		G^{A_1,A_2}_{\phi}\{T_{\mathbf{r}}f\}(\mathbf{w,u})&=e^{\mathbf{i}r_{1}\omega_{1}c_{1}}
e^{-\mathbf{i}\frac{a_{1}r_{1}^{2}}{2}c_{1}}
G^{A_1,A_2}_{\phi}\{f\}(\mathbf{m,n})e^{\mathbf{j}r_{2}\omega_{2}c_{2}}e^{-\mathbf{j}\frac{a_{2}r_{2}^{2}}{2}c_{2}}\\
\end{split}
	\end{align}
which completes the proof.\end{proof}
\begin{thm}[Modulation]
Let $\phi\in L^2(\mathbb{R}^2,\mathbb{H})\backslash \{0\}$ be a window function and $f \in L^2\left( \mathbb{R}^2,\mathbb{H}\right)$.
$\mathbb{M}_{s}f$ be modulation operator defined by $\mathbb{M}_{s}f(\mathbf{x})=e^{\mathbf{i}x_{1}s_{1}}f(\mathbf{x})e^{\mathbf{j}x_{2}s_{2}}$ with $\mathbf{s}=(s_{1},s_{2})\in \mathbb{R}^2$.
Then we have
\begin{align}
        \begin{split}
		G^{A_1,A_2}_{\phi}\{\mathbb{M}_{s}f\}(\mathbf{w,u})&=e^{\mathbf{i}\omega_{1}s_{1}d_{1}}e^{-\mathbf{i}\frac{b_{1}d_{1}s_{1}^{2}}{2}}
G^{A_1,A_2}_{\phi}\{f\}(\mathbf{v,u})e^{\mathbf{j}\omega_{2}s_{2}d_{2}}e^{-\mathbf{j}\frac{b_{2}d_{2}s_{2}^{2}}{2}}
\end{split}
	\end{align}
\end{thm}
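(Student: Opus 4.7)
My strategy parallels the Shift theorem: substitute the definition of $\mathbb{M}_sf$ into $G^{A_1,A_2}_{\phi}$ and then absorb the extra exponentials $e^{\mathbf{i}x_1s_1}$ and $e^{\mathbf{j}x_2s_2}$ into the respective QLCT kernels by completing the square. Concretely, I would begin from
\[
G^{A_1,A_2}_{\phi}\{\mathbb{M}_{s}f\}(\mathbf{w,u})=\int_{\mathbb{R}^2}K_{A_1}^{\mathbf{i}}(x_1,\omega_1)\,e^{\mathbf{i}x_1s_1}\,f(\mathbf{x})\,e^{\mathbf{j}x_2s_2}\,\overline{\phi(\mathbf{x-u})}\,K_{A_2}^{\mathbf{j}}(x_2,\omega_2)\,\mathrm{d}\mathbf{x}
\]
and introduce the shifted spectral variables $v_i:=\omega_i-b_is_i$ for $i=1,2$; these will play the role of the new QLCT-frequency arguments on the right-hand side, so $\mathbf{v}=(v_1,v_2)$.

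The core of the computation is the pair of identities
\[
K_{A_1}^{\mathbf{i}}(x_1,\omega_1)\,e^{\mathbf{i}x_1s_1}=e^{\mathbf{i}d_1\omega_1s_1}\,e^{-\mathbf{i}\frac{b_1d_1s_1^2}{2}}\,K_{A_1}^{\mathbf{i}}(x_1,v_1),
\]
\[
e^{\mathbf{j}x_2s_2}\,K_{A_2}^{\mathbf{j}}(x_2,\omega_2)=K_{A_2}^{\mathbf{j}}(x_2,v_2)\,e^{\mathbf{j}d_2\omega_2s_2}\,e^{-\mathbf{j}\frac{b_2d_2s_2^2}{2}}.
\]
I would prove each by merging the two one-sided exponentials (they share an imaginary unit, so they commute), expanding $\omega_i^2=(v_i+b_is_i)^2$ inside the $\frac{d_i}{2b_i}\omega_i^2$-term, and collecting the factors that do not depend on $x_i$. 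This step amounts to pure quadratic algebra; notably, it does not actually invoke the determinant constraint $\det A_i=1$.

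Inserting the two identities into the integrand, the $\mathbf{i}$-phases pulled out on the far left and the $\mathbf{j}$-phases pulled out on the far right are independent of $\mathbf{x}$ and therefore factor outside the integral on their respective sides. What remains is exactly
\[
\int_{\mathbb{R}^2}K_{A_1}^{\mathbf{i}}(x_1,v_1)\,f(\mathbf{x})\,\overline{\phi(\mathbf{x-u})}\,K_{A_2}^{\mathbf{j}}(x_2,v_2)\,\mathrm{d}\mathbf{x}=G^{A_1,A_2}_{\phi}f(\mathbf{v,u}),
\]
and the stated modulation formula drops out.

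The real obstacle is not this algebra but a non-commutativity subtlety that the plan above glosses over: in order to combine $e^{\mathbf{j}x_2s_2}$ with $K_{A_2}^{\mathbf{j}}$ one must first slide it past $\overline{\phi(\mathbf{x-u})}$, and in a fully non-commutative quaternion setting this move is not legitimate unless $\phi$ carries additional structure (for instance, real-valued, or taking values in the $\mathbf{j}$-plane $\mathbb{R}\oplus\mathbf{j}\mathbb{R}$). A fully rigorous write-up would either flag this as a standing hypothesis on the window or reinterpret the modulation operator so that $e^{\mathbf{j}x_2s_2}$ already sits to the right of $\overline{\phi(\mathbf{x-u})}$; once that commutation step is granted, the rest is routine.
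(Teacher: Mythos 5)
Your proposal is correct and follows essentially the same route as the paper: substitute the modulated signal into Definition 3.1, absorb $e^{\mathbf{i}x_1s_1}$ and $e^{\mathbf{j}x_2s_2}$ into the kernels by writing $\omega_i=(\omega_i-s_ib_i)+s_ib_i$ and completing the square, and pull the $x$-independent phases $e^{\mathbf{i}\omega_1s_1d_1}e^{-\mathbf{i}\frac{b_1d_1s_1^2}{2}}$ and $e^{\mathbf{j}\omega_2s_2d_2}e^{-\mathbf{j}\frac{b_2d_2s_2^2}{2}}$ out on the left and right, respectively, leaving $G^{A_1,A_2}_{\phi}f(\mathbf{v,u})$. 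The non-commutativity caveat you raise is genuine: the paper's own proof silently writes the integrand with $e^{\mathbf{j}x_2s_2}$ already to the right of $\overline{\phi(\mathbf{x-u})}$, which requires exactly the extra hypothesis (e.g.\ $\phi$ real-valued or $\mathbf{j}$-complex valued) or the reinterpretation of $\mathbb{M}_s$ that you describe, so flagging it is an improvement rather than a deviation.
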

where $\mathbf{v}=(v_{1},v_{2})\in \mathbb{R}^2$,
$ v_{i}=w_{i}-s_{i}b_{i} $, $i=1,2$.
\begin{proof} From Definition 3.1, it follows that
\begin{align}
        \begin{split}
		G^{A_1,A_2}_{\phi}\{\mathbb{M}_{s}f\}(\mathbf{w,u})&=\int_{\mathbb{R}^2}\frac{1}{\sqrt{2\pi b_1}}e^{\mathbf{i}\left(\frac{a_1}{2b_1}x_1^2-\frac{x_1\omega_1}{b_1}+\frac{d_1}{2b_1}\omega_1^2-\frac{\pi}{4} \right) }e^{\mathbf{i}x_{1}s_{1}} f(\mathbf{x}) \overline{\phi(\mathbf{x-u})}
 e^{\mathbf{j}x_{2}s_{2}} \frac{1}{\sqrt{2\pi b_2}}e^{\mathbf{j}\left(\frac{a_2}{2b_2}x_2^2-\frac{x_2\omega_2}{b_2}+\frac{d_2}{2b_2}\omega_2^2-\frac{\pi}{4} \right) }\rm{d}\mathbf{x}\\
&=\frac{1}{\sqrt{2\pi b_1}}\int_{\mathbb{R}^2}e^{\mathbf{i}
\left(\frac{a_1}{2b_1}x_1^2-\frac{x_1(\omega_1-s_{1}b_{1})}{b_1}+\frac{d_1}{2b_1}
((\omega_1-s_{1}b_{1})+s_{1}b_{1})^2-\frac{\pi}{4} \right)} f(\mathbf{x}) \overline{\phi(\mathbf{x-u})} \\
&\times \frac{1}{\sqrt{2\pi b_2}}e^{\mathbf{j}
\left(\frac{a_2}{2b_2}x_2^2-\frac{x_2(\omega_2-s_{2}b_{2})}{b_2}+\frac{d_2}{2b_2}
((\omega_2-s_{2}b_{2})+s_{2}b_{2})^2-\frac{\pi}{4} \right) }\rm{d}\mathbf{x}\\
&=e^{\mathbf{i}\omega_{1}s_{1}d_{1}}e^{-\mathbf{i}\frac{b_{1}d_{1}s_{1}^{2}}{2}}\int_{\mathbb{R}^2}\frac{1}{\sqrt{2\pi b_1}}e^{\mathbf{i}
\left(\frac{a_1}{2b_1}x_1^2-\frac{x_1(\omega_1-s_{1}b_{1})}{b_1}+\frac{d_1}{2b_1}(\omega_1-s_{1}b_{1})^2-\frac{\pi}{4} \right)}\\
&\times f(\mathbf{x}) \overline{\phi(\mathbf{x-u})} e^{\mathbf{j}\omega_{2}s_{2}d_{2}}e^{-\mathbf{j}\frac{b_{2}d_{2}s_{2}^{2}}{2}} \frac{1}{\sqrt{2\pi b_2}}e^{\mathbf{j}
\left(\frac{a_2}{2b_2}x_2^2-\frac{x_2(\omega_2-s_{2}b_{2})}{b_2}+\frac{d_2}{2b_2}(\omega_2-s_{2}b_{2})^2-\frac{\pi}{4} \right) }\rm{d}\mathbf{x}\\
\end{split}
	\end{align}
Hence,
\begin{align}
        \begin{split}
		G^{A_1,A_2}_{\phi}\{\mathbb{M}_{s}f\}(\mathbf{w,u})&=e^{\mathbf{i}\omega_{1}s_{1}d_{1}}e^{-\mathbf{i}\frac{b_{1}d_{1}s_{1}^{2}}{2}}
G^{A_1,A_2}_{\phi}\{f\}(\mathbf{v,u})e^{\mathbf{j}\omega_{2}s_{2}d_{2}}e^{-\mathbf{j}\frac{b_{2}d_{2}s_{2}^{2}}{2}}
\end{split}
	\end{align}
which completes the proof.\end{proof}
\begin{thm}[Inversion formula]
 Let $\phi\in L^2(\mathbb{R}^2,\mathbb{H})\backslash \{0\}$ be a window function, $ 0<\|\phi\|^{2}<\infty $ and $f \in L^2\left( \mathbb{R}^2,\mathbb{H}\right)$.
Then we have the inversion formula of the QWLCT,
\begin{align}
        \begin{split}
		f(\mathbf{x})=\frac{1}{\|\phi\|^{2}}\int_{\mathbb{R}^2}\int_{\mathbb{R}^2}\overline{K_{A_1}^{\mathbf{i}}(x_1,\omega_1)}
G^{A_1,A_2}_{\phi}\{f\}(\mathbf{w,u})\overline{K_{A_2}^{\mathbf{j}}(x_2,\omega_2)} \phi\mathbf{(x-u)}\rm{d}\mathbf{w}\rm{d}\mathbf{u}\\
\end{split}
	\end{align} \end{thm}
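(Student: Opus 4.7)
The plan is to deduce the inversion formula directly from equation (19), which already gives a local inversion identity for fixed $\mathbf{u}$, and then to eliminate the window factor $\overline{\phi(\mathbf{x-u})}$ by integrating against $\phi(\mathbf{x-u})$ in the $\mathbf{u}$ variable.

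More explicitly, I would first invoke equation (19) to write
\begin{equation*}
\int_{\mathbb{R}^2}\overline{K_{A_1}^{\mathbf{i}}(x_1,\omega_1)}\,G^{A_1,A_2}_{\phi}\{f\}(\mathbf{w,u})\,\overline{K_{A_2}^{\mathbf{j}}(x_2,\omega_2)}\,\mathrm{d}\mathbf{w} = f(\mathbf{x})\overline{\phi(\mathbf{x-u})}.
\end{equation*}
Right-multiply both sides by $\phi(\mathbf{x-u})$. Since $\overline{\phi(\mathbf{x-u})}\phi(\mathbf{x-u})=|\phi(\mathbf{x-u})|^{2}$ is a real scalar, it commutes with $f(\mathbf{x})$, so the right-hand side becomes $f(\mathbf{x})|\phi(\mathbf{x-u})|^{2}$.

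Next I would integrate both sides over $\mathbf{u}\in\mathbb{R}^2$. By the translation invariance of Lebesgue measure, after the change of variable $\mathbf{t}=\mathbf{x-u}$,
\begin{equation*}
\int_{\mathbb{R}^2}|\phi(\mathbf{x-u})|^{2}\,\mathrm{d}\mathbf{u} = \int_{\mathbb{R}^2}|\phi(\mathbf{t})|^{2}\,\mathrm{d}\mathbf{t} = \|\phi\|^{2},
\end{equation*}
which is finite and nonzero by hypothesis. Dividing by $\|\phi\|^{2}$ then produces exactly the claimed formula.

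The only real technical point is justifying the interchange of the $\mathbf{w}$ and $\mathbf{u}$ integrations needed to bring the $\phi(\mathbf{x-u})$ inside the inner $\mathbf{w}$-integral, i.e.\ applying Fubini's theorem to the double integral. This is the step I expect to be the main obstacle if one wanted to be fully rigorous, since the inner QLCT integral is only conditionally convergent in general. The standard remedy is to argue first on a dense subclass (e.g.\ Schwartz quaternion-valued functions, or signals for which $G^{A_1,A_2}_{\phi}f\in L^{1}(\mathbb{R}^2\times\mathbb{R}^2,\mathbb{H})$), where absolute integrability lets Fubini apply directly, and then extend to all of $L^{2}(\mathbb{R}^2,\mathbb{H})$ by density together with the boundedness estimate of Theorem 1 and the QLCT Parseval formula (Lemma 2). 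The remaining manipulations reduce to the routine change of variables and the fact that the real scalar $|\phi(\mathbf{x-u})|^{2}$ commutes with every quaternion, both of which are immediate.
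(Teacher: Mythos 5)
Your proposal is correct and follows essentially the same route as the paper: starting from the inverse-QLCT identity (19), right-multiplying by $\phi(\mathbf{x-u})$, integrating over $\mathbf{u}$ so that $\int_{\mathbb{R}^2}|\phi(\mathbf{x-u})|^{2}\,\mathrm{d}\mathbf{u}=\|\phi\|^{2}$, and dividing. Your additional remarks on justifying the Fubini interchange (via a dense subclass and the boundedness/Parseval estimates) supply rigor that the paper's proof simply passes over, but the core argument is identical.
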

\begin{proof}  Multiplying both sides of (19) from the right by $ \phi(\mathbf{x-u})$ and integrating with respect to $ \rm{d}\mathbf{u}$ we get
\begin{align}
		\begin{split}
		\int_{\mathbb{R}^2}f(\mathbf{x})\overline{\phi(\mathbf{x-u})}\phi(\mathbf{x-u})\rm{d}\mathbf{u}
&=\int_{\mathbb{R}^2}\int_{\mathbb{R}^2}\overline{K_{A_1}^{\mathbf{i}}(x_1,\omega_1)}
G^{A_1,A_2}_{\phi}\{f\}(\mathbf{w,u})\overline{K_{A_2}^{\mathbf{j}}(x_2,\omega_2)} \phi\mathbf{(x-u)}\rm{d}\mathbf{w}\rm{d}\mathbf{u}\\
		\end{split}
	\end{align}
Using (9), we have
\begin{align}
        \begin{split}
		f(\mathbf{x})=\frac{1}{\|\phi\|^{2}}\int_{\mathbb{R}^2}\int_{\mathbb{R}^2}\overline{K_{A_1}^{\mathbf{i}}(x_1,\omega_1)}
G^{A_1,A_2}_{\phi}\{f\}(\mathbf{w,u})\overline{K_{A_2}^{\mathbf{j}}(x_2,\omega_2)} \phi\mathbf{(x-u)}\rm{d}\mathbf{w}\rm{d}\mathbf{u}\\
\end{split}
	\end{align}
which completes the proof.\end{proof}
\begin{thm}[Orthogonality relation]
 Let $\phi,\psi \in L^2(\mathbb{R}^2,\mathbb{H})\backslash \{0\}$ be a window function and $f,g \in L^2\left( \mathbb{R}^2,\mathbb{H}\right)$.
Then
\begin{align}
		\begin{split}
		\langle G^{A_1,A_2}_{\phi}\{f\}(\mathbf{w,u}), {G^{A_1,A_2}_{\psi}\{g\}(\mathbf{w,u})} \rangle
=[\langle f,g\rangle\langle\phi,\psi\rangle]_{0}\\
		\end{split}
	\end{align} \end{thm}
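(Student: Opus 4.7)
The plan is to reduce the orthogonality relation for the QWLCT to the QLCT Parseval formula of Lemma 2, after which the double $(\mathbf{w},\mathbf{u})$-integral separates, via Fubini and a change of variables, into a product of two $L^2$ inner products. First I would unfold the LHS using the symmetric real inner product from (8):
\begin{align*}
\langle G_\phi^{A_1,A_2}f, G_\psi^{A_1,A_2}g\rangle = \int_{\mathbb{R}^2}\!\!\int_{\mathbb{R}^2}\bigl[G_\phi^{A_1,A_2}f(\mathbf{w},\mathbf{u})\,\overline{G_\psi^{A_1,A_2}g(\mathbf{w},\mathbf{u})}\bigr]_0\,\mathrm{d}\mathbf{w}\,\mathrm{d}\mathbf{u},
\end{align*}
and swap the order of integration so the $\mathbf{w}$-integral is treated first, for each fixed $\mathbf{u}$.

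Next, using (18) to write $G_\phi^{A_1,A_2}f(\mathbf{w},\mathbf{u}) = \mathcal{L}_{A_1,A_2}^{\mathbb{H}}\{f(\cdot)\overline{\phi(\cdot-\mathbf{u})}\}(\mathbf{w})$, the inner $\mathbf{w}$-integral becomes precisely the QLCT Parseval pairing of $f(\mathbf{x})\overline{\phi(\mathbf{x}-\mathbf{u})}$ with $g(\mathbf{x})\overline{\psi(\mathbf{x}-\mathbf{u})}$. Applying Lemma 2 together with the conjugation rule $\overline{pq}=\overline{q}\overline{p}$ from (5) collapses it to
\begin{align*}
\int_{\mathbb{R}^2}\bigl[f(\mathbf{x})\,\overline{\phi(\mathbf{x}-\mathbf{u})}\,\psi(\mathbf{x}-\mathbf{u})\,\overline{g(\mathbf{x})}\bigr]_0\,\mathrm{d}\mathbf{x}.
\end{align*}
I would then substitute $\mathbf{t}=\mathbf{x}-\mathbf{u}$ in the $\mathbf{u}$-integral so that $\mathbf{x}$ and $\mathbf{t}$ decouple into independent integration variables.

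Finally, I would use the cyclic symmetry (3) of $[\cdot]_0$ to regroup the four-factor quaternion product so that $f\overline{g}$ and $\phi\overline{\psi}$ appear as adjacent blocks, and then apply the elementary componentwise identity
\begin{align*}
\int_{\mathbb{R}^2}\!\!\int_{\mathbb{R}^2}\bigl[A(\mathbf{x})B(\mathbf{t})\bigr]_0\,\mathrm{d}\mathbf{x}\,\mathrm{d}\mathbf{t} = \Bigl[\bigl(\textstyle\int A\,\mathrm{d}\mathbf{x}\bigr)\bigl(\int B\,\mathrm{d}\mathbf{t}\bigr)\Bigr]_0
\end{align*}
to factor the double integral and read off $[\langle f,g\rangle\langle\phi,\psi\rangle]_0$. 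I expect the main obstacle to be exactly this last rearrangement: noncommutativity of $\mathbb{H}$ means that a cyclic rotation inside $[\cdot]_0$ only preserves the scalar part of the four-factor product, not the product itself, so one must choose the rotation that produces $f(\mathbf{x})\overline{g(\mathbf{x})}$ and $\phi(\mathbf{t})\overline{\psi(\mathbf{t})}$ in exactly the order prescribed by the convention (7) for $\langle\cdot,\cdot\rangle_{L^2(\mathbb{R}^2,\mathbb{H})}$. A careless swap would yield $\overline{g}f$ or $\overline{\psi}\phi$, whose product need not share its scalar part with $(f\overline{g})(\phi\overline{\psi})$, so the cyclic step must be matched against the inner-product convention before one can identify the final expression as $[\langle f,g\rangle\langle\phi,\psi\rangle]_0$.
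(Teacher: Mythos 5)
Up to the last step your plan is essentially the paper's own argument in a lighter disguise: where you invoke the QLCT Parseval formula of Lemma~2 for each fixed $\mathbf{u}$, the paper expands $\overline{G^{A_1,A_2}_{\psi}\{g\}}$, uses the cyclic symmetry (3) and then the inversion identity (19) to evaluate the inner $\mathbf{w}$-integral; both routes land on exactly the same intermediate expression $\int_{\mathbb{R}^4}\bigl[f(\mathbf{x})\overline{\phi(\mathbf{x}-\mathbf{u})}\,\psi(\mathbf{x}-\mathbf{u})\,\overline{g(\mathbf{x})}\bigr]_{0}\,\mathrm{d}\mathbf{u}\,\mathrm{d}\mathbf{x}$, and your use of only the scalar part of Parseval is in fact the safer form of that lemma for the two-sided QLCT. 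So the reduction is fine and is not a different method.

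The genuine gap is the final factorization, which you correctly flag as the main obstacle but do not close — and it cannot be closed in the way you describe. Cyclic rotations of the word $f(\mathbf{x})\,\overline{\phi(\mathbf{y})}\,\psi(\mathbf{y})\,\overline{g(\mathbf{x})}$ preserve the cyclic order of its letters, so the only way to make the $\mathbf{x}$-factors and $\mathbf{y}$-factors adjacent is through the adjacencies $\overline{g(\mathbf{x})}f(\mathbf{x})$ and $\overline{\phi(\mathbf{y})}\psi(\mathbf{y})$; your componentwise Fubini identity then yields $\bigl[\bigl(\int_{\mathbb{R}^2}\overline{\phi}\psi\,\mathrm{d}\mathbf{y}\bigr)\bigl(\int_{\mathbb{R}^2}\overline{g}f\,\mathrm{d}\mathbf{x}\bigr)\bigr]_{0}$, and no rotation produces the blocks $f\overline{g}$ and $\phi\overline{\psi}$ that the convention (7) requires, since $\int\overline{g}f\neq\int f\overline{g}$ as quaternions. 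The discrepancy is real, not just an ordering formality: take $E=[0,1]^2$ and $f=\psi=\mathbf{j}\chi_{E}$, $g=\phi=\mathbf{i}\chi_{E}$; then the reduced double integral equals $\bigl[(-\mathbf{k})(-\mathbf{k})\bigr]_{0}=-1$, while $[\langle f,g\rangle\langle\phi,\psi\rangle]_{0}=[\mathbf{k}(-\mathbf{k})]_{0}=1$. So your proposal does not reach the stated right-hand side — and, to be fair, the paper's own proof asserts this same factorization in its last two lines without justification, so you have located exactly the weak point of the published argument. The identity does follow from your scheme when the extra factor is central, e.g.\ when $\phi=\psi$ (so $\int\overline{\phi}\psi=\|\phi\|^2$ is real) or when the windows are real-valued, using $[pq]_{0}=[qp]_{0}$; in the stated generality the correct conclusion of the computation is $\bigl[\bigl(\int\overline{\phi}\psi\bigr)\bigl(\int\overline{g}f\bigr)\bigr]_{0}$, and a complete proof must either add such a hypothesis or restate the theorem accordingly.
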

\begin{proof} Using (3),(8), we get the output as follows:

\begin{align}
		\begin{split}
		\langle G^{A_1,A_2}_{\phi}\{f\}(\mathbf{w,u}), {G^{A_1,A_2}_{\psi}\{g\}(\mathbf{w,u})} \rangle&=\int_{\mathbb{R}^4}\left[ G^{A_1,A_2}_{\phi}\{f\}(\mathbf{w,u}) \overline{{G^{A_1,A_2}_{\psi}\{g\}(\mathbf{w,u})}}\right]_{0}\rm{d}\mathbf{w}\rm{d}\mathbf{u}\\
&=\int_{\mathbb{R}^4}\left[ G^{A_1,A_2}_{\phi}\{f\}(\mathbf{w,u})
 \overline{\int_{\mathbb{R}^2}K_{A_1}^{\mathbf{i}}(x_1,\omega_1)g(\mathbf{x}) \overline{\psi(\mathbf{x-u})} K_{A_2}^{\mathbf{j}}(x_2,\omega_2)\rm{d}\mathbf{x}}\right]_{0}\rm{d}\mathbf{w}\rm{d}\mathbf{u}\\
&=\int_{\mathbb{R}^6}\left[ G^{A_1,A_2}_{\phi}\{f\}(\mathbf{w,u}) K_{A_2}^{\mathbf{-j}}(x_2,\omega_2)\psi(\mathbf{x-u})\overline{g(\mathbf{x})}K_{A_1}^{\mathbf{-i}}(x_1,\omega_1) \right]_{0}\rm{d}\mathbf{w}\rm{d}\mathbf{u}\rm{d}\mathbf{x}\\
&=\int_{\mathbb{R}^6}\left[ K_{A_1}^{\mathbf{-i}}(x_1,\omega_1) G^{A_1,A_2}_{\phi}\{f\}(\mathbf{w,u}) K_{A_2}^{\mathbf{-j}}(x_2,\omega_2)\psi(\mathbf{x-u})\overline{g(\mathbf{x})} \right]_{0}\rm{d}\mathbf{w}\rm{d}\mathbf{u}\rm{d}\mathbf{x}\\
&=\int_{\mathbb{R}^4}\left[ \int_{\mathbb{R}^2}K_{A_1}^{\mathbf{-i}}(x_1,\omega_1) G^{A_1,A_2}_{\phi}\{f\}(\mathbf{w,u}) K_{A_2}^{\mathbf{-j}}(x_2,\omega_2)\rm{d}\mathbf{w}\psi(\mathbf{x-u})\overline{g(\mathbf{x})} \right]_{0}\rm{d}\mathbf{u}\rm{d}\mathbf{x}\\
		\end{split}
	\end{align}
Because
\begin{align}
\overline{K_{A_1}^{\mathbf{i}}(x_1,\omega_1)}=K_{A_{1}}^{\mathbf{i}^{-1}}(\omega_1,x_1)=K_{A_{1}^{-1}}^{\mathbf{i}}(\omega_1,x_1)\\
\overline{K_{A_2}^{\mathbf{j}}(x_2,\omega_2)}=K_{A_{2}}^{\mathbf{j}^{-1}}(\omega_2,x_2)=K_{A_{2}^{-1}}^{\mathbf{j}}(\omega_2,x_2)
	\end{align}	
Using (19), we have
\begin{align}
		\begin{split}
\langle G^{A_1,A_2}_{\phi}\{f\}(\mathbf{w,u}), {G^{A_1,A_2}_{\psi}\{g\}(\mathbf{w,u})} \rangle&=\int_{\mathbb{R}^4}\left[\int_{\mathbb{R}^2}K_{A_1^{-1}}^{\mathbf{i}}(\omega_1,x_1) G^{A_1,A_2}_{\phi}\{f\}(\mathbf{w,u}) K_{A_2^{-1}}^{\mathbf{j}}(\omega_2,x_2)\rm{d}\mathbf{w} \psi(\mathbf{x-u})\overline{g(\mathbf{x})}\right]_{0}\rm{d}\mathbf{u}\rm{d}\mathbf{x}\\
&=\int_{\mathbb{R}^4}\left[ f(\mathbf{x})\overline{\phi(\mathbf{x-u})} \psi(\mathbf{x-u})\overline{g(\mathbf{x})}  \right]_{0}\rm{d}\mathbf{u}\rm{d}\mathbf{x}\\
		\end{split}
	\end{align}
Using the change of variables $ \mathbf{x-u}=\mathbf{y}$, the equation becomes
\begin{align}
		\begin{split}
		\langle G^{A_1,A_2}_{\phi}\{f\}(\mathbf{w,u}), {G^{A_1,A_2}_{\psi}\{g\}(\mathbf{w,u})} \rangle
&=\int_{\mathbb{R}^4}\left[ f(\mathbf{x})\overline{\phi(\mathbf{y})} \psi(\mathbf{y})\overline{g(\mathbf{x})}  \right]_{0}\rm{d}\mathbf{y}\rm{d}\mathbf{x}\\
&=\left[\int_{\mathbb{R}^2} f(\mathbf{x})\overline{g(\mathbf{x})}\rm{d}\mathbf{x}  \int_{\mathbb{R}^2}\psi(\mathbf{y}) \overline{\phi(\mathbf{y})} \rm{d}\mathbf{y} \right]_{0}\\
&=[\langle f,g\rangle\langle\phi,\psi\rangle]_{0}
		\end{split}
	\end{align}
which completes the proof.\end{proof}
Based on the above theorem, we may conclude the following important consequences.

(i) If $\phi=\psi$, then
\begin{align}
		\begin{split}
		\langle G^{A_1,A_2}_{\phi}\{f\}(\mathbf{w,u}), {G^{A_1,A_2}_{\phi}\{g\}(\mathbf{w,u})} \rangle
=\|\phi\|^{2}_{L^2(\mathbb{R}^2)}\langle f,g\rangle
		\end{split}
	\end{align}

(ii) If $f=g$, then
\begin{align}
		\begin{split}
		\langle G^{A_1,A_2}_{\phi}\{f\}(\mathbf{w,u}), {G^{A_1,A_2}_{\psi}\{f\}(\mathbf{w,u})} \rangle
=\|f\|^{2}_{L^2(\mathbb{R}^2)}\langle \phi,\psi\rangle
		\end{split}
	\end{align}

(iii) If  $f=g$ and $\phi=\psi$, then
\begin{align}
		\begin{split}
		\langle G^{A_1,A_2}_{\phi}\{f\}(\mathbf{w,u}), {G^{A_1,A_2}_{\phi}\{f\}(\mathbf{w,u})} \rangle&=
\int_{\mathbb{R}^2}\int_{\mathbb{R}^2}| G^{A_1,A_2}_{\phi}\{f\}(\mathbf{w,u})|^{2}\rm{d}\mathbf{w}\rm{d}\mathbf{u}
=\|\textit{f}\|^{2}_{L^2(\mathbb{R}^2)} \|\phi\|^{2}_{L^2(\mathbb{R}^2)}
		\end{split}
	\end{align}
Table 1:
Properties of the QWLCT of $\phi\in L^2(\mathbb{R}^2,\mathbb{H})\backslash \{0\}$  and $f, g\in L^2\left( \mathbb{R}^2,\mathbb{H}\right)$,
where $\lambda, \mu \in\mathbb{H}$ are arbitrary constants.

\begin{tabular}{l*{6}{c}r}
\hline
Property    & Function & QWLCT  \\
\hline
Boundedness   & $ |G^{A_1,A_2}_{\phi}f(\mathbf{w,u})|\leq$ &$\frac{1}{2\pi\sqrt{|b_{1}b_{2}|}} \|f\|_{L^{2}(\mathbb{R}^2)} \|\phi\|_{L^{2}(\mathbb{R}^2)} $\\
Linearity     & $\lambda f+\mu g$ & $\lambda G^{A_1,A_2}_{\phi}f(\mathbf{w,u})+\mu G^{A_1,A_2}_{\phi}g(\mathbf{w,u})$\\
Parity        & $ G^{A_1,A_2}_{P\phi}\{P f\}(\mathbf{w,u}) $ & $ G^{A_1,A_2}_{\phi}f(\mathbf{-w,-u}) $  \\
Shift         & $ f(\mathbf{x-r}) $ & $ e^{\mathbf{i}r_{1}\omega_{1}c_{1}}e^{-\mathbf{i}\frac{a_{1}r_{1}^{2}}{2}c_{1}}
G^{A_1,A_2}_{\phi}\{f\}(\mathbf{m,n})$\\
 &&$\times e^{\mathbf{j}r_{2}\omega_{2}c_{2}}e^{-\mathbf{j}\frac{a_{2}r_{2}^{2}}{2}c_{2}} $  \\
Modulation    & $ e^{\mathbf{i}x_{1}s_{1}}f(\mathbf{x})e^{\mathbf{j}x_{2}s_{2}} $  & $ e^{\mathbf{i}\omega_{1}s_{1}d_{1}}e^{-\mathbf{i}\frac{b_{1}d_{1}s_{1}^{2}}{2}}
G^{A_1,A_2}_{\phi}\{f\}(\mathbf{v,u})$\\
 &&$\times e^{\mathbf{j}\omega_{2}s_{2}d_{2}}e^{-\mathbf{j}\frac{b_{2}d_{2}s_{2}^{2}}{2}}  $ \\
\hline
\end{tabular}

\begin{tabular}{l*{6}{c}r}
\hline
Formula      \\
\hline
Inversion         & $ f(\mathbf{x})=\frac{1}{\|\phi\|^{2}}\int_{\mathbb{R}^2}\int_{\mathbb{R}^2}\overline{K_{A_1}^{\mathbf{i}}(x_1,\omega_1)}
G^{A_1,A_2}_{\phi}\{f\}(\mathbf{w,u})$\\
 &$\times\overline{K_{A_2}^{\mathbf{j}}(x_2,\omega_2)} \phi\mathbf{(x-u)}\rm{d}\mathbf{w}\rm{d}\mathbf{u} $ \\
Orthogonality     & $ \langle G^{A_1,A_2}_{\phi}\{f\}(\mathbf{w,u}), {G^{A_1,A_2}_{\psi}\{g\}(\mathbf{w,u})} \rangle
=[\langle f,g\rangle\langle\phi,\psi\rangle]_{0} $   \\
& $\langle G^{A_1,A_2}_{\phi}\{f\}(\mathbf{w,u}), {G^{A_1,A_2}_{\phi}\{g\}(\mathbf{w,u})} \rangle
=\|\phi\|^{2}_{L^2(\mathbb{R}^2)}\langle f,g\rangle$  \\
& $\langle G^{A_1,A_2}_{\phi}\{f\}(\mathbf{w,u}), {G^{A_1,A_2}_{\psi}\{f\}(\mathbf{w,u})} \rangle
=\|f\|^{2}_{L^2(\mathbb{R}^2)}\langle \phi,\psi\rangle$ \\
& $\langle G^{A_1,A_2}_{\phi}\{f\}(\mathbf{w,u}), {G^{A_1,A_2}_{\phi}\{f\}(\mathbf{w,u})} \rangle$\\
&$=\int_{\mathbb{R}^2}\int_{\mathbb{R}^2}| G^{A_1,A_2}_{\phi}\{f\}(\mathbf{w,u})|^{2}\rm{d}\mathbf{w}\rm{d}\mathbf{u}$$=\|f\|^{2}_{L^2(\mathbb{R}^2)} \|\phi\|^{2}_{L^2(\mathbb{R}^2)}$ \\
\hline
\end{tabular}
\medskip

\noindent{\bf 4. Heisenberg's Uncertainty Principle for the QWLCT}

The Heisenberg uncertainty principle and quaternions are both basic for
quantum mechanics. In quantum mechanics an uncertainty principle asserts
that one cannot make certain of the position and velocity of
an electron (or any particle) at the same time. That is, increasing
the knowledge of the position decreases the knowledge
of the velocity or momentum of an electron.
In signal processing an uncertainty principle states that the product of the variances of the signal in the
time and frequency domains has a lower bound. The uncertainty principles of the Fourier
transform and the quaternion Fourier
transform had been studied in [24,22]. Recently, the authors established the uncertainty principles associated with the LCT in [31,25,32,26,33,34]. The uncertainty principles for the WLCT had been discussed in [27]. Theirs uncertainties
were generalizations of Lieb's uncertainty principles in the WLCT domains. Recently, Heisenberg's uncertainty relations were extended to the quaternion linear canonical transform [17,16]. This uncertainty principle prescribes a lower bound on the product
of the effective widths of quaternion-valued signals in the spatial and frequency domains. Let us give a short and simple proof the Heisenberg's Uncertainty Principle of the QWLCT.
\begin{lem}[QLCT uncertainty principle] [16]
 Let $f \in L^2\left( \mathbb{R}^2,\mathbb{H}\right)$ and \\
 $ \mathcal{L}_{A_1,A_2}^{\mathbb{H}}\{f\}(\mathbf{w})\in L^2\left( \mathbb{R}^2,\mathbb{H}\right)$
then we have
\begin{align}
		\begin{split}
\int_{\mathbb{R}^2}x^{2}_{k}|f(\mathbf{x})|^{2}\rm{d}\mathbf{x}\int_{\mathbb{R}^2}\omega^{2}_{k}
|\mathcal{L}_{A_1,A_2}^{\mathbb{H}}\{\textit{f}\}(\mathbf{w})|^{2}\rm{d}\mathbf{w}\geq \frac{b_{k}^{2}}{4}\left(\int_{\mathbb{R}^2}|\textit{f}(\mathbf{x})|^{2}\rm{d}\mathbf{x}\right)^{2},
		\end{split}
	\end{align}
where $k=1,2.$
\end{lem}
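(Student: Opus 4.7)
My plan is to reduce the QLCT Heisenberg inequality to the corresponding inequality for the two-sided quaternion Fourier transform (QFT), following the classical chirp-factorization trick that relates the LCT to the Fourier transform. First, I would rewrite the QLCT as a chirp-modulated QFT. Define
\[
g(\mathbf{x}) := e^{\mathbf{i}\frac{a_1}{2b_1}x_1^2}\, f(\mathbf{x})\, e^{\mathbf{j}\frac{a_2}{2b_2}x_2^2},
\]
and let $\mathcal{F}_{\mathbb{H}}$ denote the two-sided QFT with kernels $e^{-\mathbf{i}x_1\xi_1}$ (left) and $e^{-\mathbf{j}x_2\xi_2}$ (right). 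Substituting (12)--(13) into Definition~2, a direct computation gives
\[
\mathcal{L}_{A_1,A_2}^{\mathbb{H}}\{f\}(\omega_1,\omega_2) = \frac{e^{-\mathbf{i}\pi/4}}{\sqrt{|b_1 b_2|}}\, e^{\mathbf{i}\frac{d_1}{2b_1}\omega_1^2}\, \mathcal{F}_{\mathbb{H}}\{g\}(\omega_1/b_1,\omega_2/b_2)\, e^{\mathbf{j}\frac{d_2}{2b_2}\omega_2^2}\, e^{-\mathbf{j}\pi/4}.
\]
Crucially, the $\mathbf{i}$-chirps sit to the left of $g$ while the $\mathbf{j}$-chirps sit to the right, so non-commutativity creates no ambiguity.

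The second step is to exploit that every chirp factor has unit quaternion modulus. Therefore $|g(\mathbf{x})|=|f(\mathbf{x})|$ pointwise, and
\[
|\mathcal{L}_{A_1,A_2}^{\mathbb{H}}\{f\}(\mathbf{w})|^2 = \frac{1}{|b_1 b_2|}\,|\mathcal{F}_{\mathbb{H}}\{g\}(\omega_1/b_1,\omega_2/b_2)|^2.
\]
Performing the change of variables $\xi_k=\omega_k/b_k$ yields
\[
\int_{\mathbb{R}^2} \omega_k^{2}\,|\mathcal{L}_{A_1,A_2}^{\mathbb{H}}\{f\}(\mathbf{w})|^2\, d\mathbf{w} = b_k^{2} \int_{\mathbb{R}^2} \xi_k^{2}\,|\mathcal{F}_{\mathbb{H}}\{g\}(\boldsymbol{\xi})|^2\, d\boldsymbol{\xi},
\]
while QFT Plancherel (which is the $A_i = \bigl[\begin{smallmatrix}0 & 1\\-1 & 0\end{smallmatrix}\bigr]$ instance of Lemma~2) gives $\int|\mathcal{F}_{\mathbb{H}}\{g\}|^2 d\boldsymbol{\xi} = \int|g|^2 d\mathbf{x} = \int|f|^2 d\mathbf{x}$.

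The third step is to apply the Heisenberg uncertainty principle for the two-sided QFT to $g$,
\[
\int_{\mathbb{R}^2} x_k^{2}\,|g|^2\, d\mathbf{x}\,\cdot \int_{\mathbb{R}^2} \xi_k^{2}\,|\mathcal{F}_{\mathbb{H}}\{g\}|^2\, d\boldsymbol{\xi}\ \geq\ \tfrac{1}{4}\Bigl(\int_{\mathbb{R}^2}|g|^2\, d\mathbf{x}\Bigr)^{2},
\]
and substitute the identities from Step~2 together with $|g|=|f|$. After multiplying through by $b_k^{2}$ the claimed inequality drops out.

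The main obstacle is justifying the two-sided QFT Heisenberg inequality used in Step~3. If it cannot be cited, I would prove it directly: apply QFT Plancherel to the differentiation formulas $\mathcal{F}_{\mathbb{H}}\{-\mathbf{i}\,\partial_{x_1}g\}(\boldsymbol{\xi}) = \xi_1\,\mathcal{F}_{\mathbb{H}}\{g\}(\boldsymbol{\xi})$ and analogously on the right for $k=2$, rewriting $\int \xi_k^{2}|\mathcal{F}_{\mathbb{H}}\{g\}|^{2}d\boldsymbol{\xi} = \int|\partial_{x_k}g|^{2} d\mathbf{x}$; then combine integration by parts $\int|g|^2 d\mathbf{x} = -\int x_k\,\partial_{x_k}|g|^2 d\mathbf{x}$ with the quaternion Cauchy--Schwarz inequality of Lemma~1 applied to $x_k g$ and $\partial_{x_k} g$, using the real-scalar-part inner product (8) to bypass quaternion non-commutativity. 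The one subtlety is placing $\partial_{x_k}$ on the side of $g$ that matches the imaginary unit in the QFT kernel on that side, so that the frequency-differentiation rule goes through without any reordering of quaternion factors.
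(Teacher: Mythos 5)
Your proposal is essentially correct, but note that the paper itself gives no proof of this lemma: it is quoted verbatim from reference [16] (Kou--Ou--Morais), and the text only manipulates it afterwards (substituting the inverse QLCT and Plancherel) to obtain the form used in the QWLCT uncertainty theorem. Your chirp-factorization argument --- writing $\mathcal{L}_{A_1,A_2}^{\mathbb{H}}\{f\}$ as left/right unit-modulus chirps times a two-sided QFT of $g(\mathbf{x})=e^{\mathbf{i}\frac{a_1}{2b_1}x_1^2}f(\mathbf{x})e^{\mathbf{j}\frac{a_2}{2b_2}x_2^2}$ evaluated at $(\omega_1/b_1,\omega_2/b_2)$, then rescaling and invoking the component-wise QFT Heisenberg inequality --- is precisely the standard route by which this result is established in the cited literature, so you have in effect supplied the proof the paper outsources. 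Your handling of non-commutativity is sound: the $\mathbf{i}$-factors stay on the left, the $\mathbf{j}$-factors on the right, $|pq|=|p||q|$ gives $|g|=|f|$ and the modulus identity for the transform, and the side-matched differentiation rules $\mathcal{F}_{\mathbb{H}}\{-\mathbf{i}\,\partial_{x_1}g\}=\xi_1\mathcal{F}_{\mathbb{H}}\{g\}$ and $\mathcal{F}_{\mathbb{H}}\{(\partial_{x_2}g)(-\mathbf{j})\}=\xi_2\mathcal{F}_{\mathbb{H}}\{g\}$ are the right way to run the fallback proof.

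Two small bookkeeping cautions. First, your factorization constant $1/\sqrt{|b_1b_2|}$ is only consistent if $\mathcal{F}_{\mathbb{H}}$ carries the unitary $\frac{1}{2\pi}$ normalization; with the bare kernels $e^{-\mathbf{i}x_1\xi_1}$, $e^{-\mathbf{j}x_2\xi_2}$ as you defined them, the prefactor is $\frac{1}{2\pi\sqrt{|b_1b_2|}}$ and the QFT Plancherel/Heisenberg constants pick up compensating factors of $(2\pi)^2$; either convention yields $\frac{b_k^2}{4}$ in the end, but the conventions must not be mixed, since you explicitly use $\int|\mathcal{F}_{\mathbb{H}}\{g\}|^2=\int|g|^2$. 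Second, in the fallback proof the identity $\int|g|^2\,\mathrm{d}\mathbf{x}=-\int x_k\,\partial_{x_k}|g|^2\,\mathrm{d}\mathbf{x}$ and the step $\partial_{x_k}|g|^2=2[(\partial_{x_k}g)\overline{g}]_0$ require a decay/density argument (and the tacit assumption that $x_kf$ and $\omega_k\mathcal{L}_{A_1,A_2}^{\mathbb{H}}\{f\}$ lie in $L^2$, the inequality being trivial otherwise); stating this would close the argument cleanly.
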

Substituting the inverse transform for the QLCT (14) into the left-hand side of
(48), we obtain
\begin{align}
		\begin{split}
\int_{\mathbb{R}^2}x^{2}_{k}|\mathcal{L}_{A_1,A_2}^{-1}[\mathcal{L}_{A_1,A_2}^{\mathbb{H}}\{f\}]|^{2}
\rm{d}\mathbf{x}\int_{\mathbb{R}^2}\omega^{2}_{k}
|\mathcal{L}_{A_1,A_2}^{\mathbb{H}}\{\textit{f}\}(\mathbf{w})|^{2}\rm{d}\mathbf{w}
\geq \frac{b_{k}^{2}}{4}\left(\int_{\mathbb{R}^2}|f(\mathbf{x})|^{2}\rm{d}\mathbf{x}\right)^{2},
		\end{split}
	\end{align}
where $k=1,2.$ Further, applying Plancherel's theorem for the QLCT (16) to the right-hand side of
(49), we have
\begin{align}
		\begin{split}
\int_{\mathbb{R}^2}x^{2}_{k}|\mathcal{L}_{A_1,A_2}^{-1}[\mathcal{L}_{A_1,A_2}^{\mathbb{H}}\{f\}]|^{2}
\rm{d}\mathbf{x}\int_{\mathbb{R}^2}\omega^{2}_{k}
|\mathcal{L}_{A_1,A_2}^{\mathbb{H}}\{\textit{f}\}(\mathbf{w})|^{2}\rm{d}\mathbf{w}
\geq \frac{b_{k}^{2}}{4}\left(\int_{\mathbb{R}^2}|\mathcal{L}_{A_1,A_2}^{\mathbb{H}}\{f\}(\mathbf{w})|^{2}
\rm{d}\mathbf{w}\right)^{2}, k=1,2
		\end{split}
	\end{align}
\begin{lem}
 Let $\phi \in L^2(\mathbb{R}^2,\mathbb{H})\backslash \{0\}$ be a window function and $f \in L^2\left( \mathbb{R}^2,\mathbb{H}\right)$.
Then
\begin{align}
		\begin{split}
\|\phi\|^{2}_{L^2(\mathbb{R}^2)} \int_{\mathbb{R}^2}x^{2}_{k}|f(\mathbf{x})|^{2}\rm{d}\mathbf{x}=
\int_{\mathbb{R}^2} \int_{\mathbb{R}^2}x^{2}_{k}|\mathcal{L}_{A_1,A_2}^{-1}\{G_{\phi}^{A_1,A_2}
\textit{f}(\mathbf{w,u})\}(\mathbf{x})|^{2}\rm{d}\mathbf{x}\rm{d}\mathbf{u}
		\end{split}
	\end{align}\end{lem}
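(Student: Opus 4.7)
The plan is to recognize that the integrand on the right-hand side factorizes once the inverse QLCT is applied, and then to reduce the double integral by a translation change of variable.

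First I would invoke equation (19), the inversion identity for the QWLCT, which tells us that
\begin{align*}
\mathcal{L}_{A_1,A_2}^{-1}\{G_{\phi}^{A_1,A_2}f(\mathbf{w,u})\}(\mathbf{x}) = f(\mathbf{x})\,\overline{\phi(\mathbf{x-u})}.
\end{align*}
Using the multiplicative property of the quaternion modulus, this gives
\begin{align*}
\bigl|\mathcal{L}_{A_1,A_2}^{-1}\{G_{\phi}^{A_1,A_2}f(\mathbf{w,u})\}(\mathbf{x})\bigr|^{2} = |f(\mathbf{x})|^{2}\,|\phi(\mathbf{x-u})|^{2},
\end{align*}
so the right-hand side of the claimed identity becomes
\begin{align*}
\int_{\mathbb{R}^2}\int_{\mathbb{R}^2} x_k^{2}\,|f(\mathbf{x})|^{2}\,|\phi(\mathbf{x-u})|^{2}\,d\mathbf{x}\,d\mathbf{u}.
\end{align*}

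Next I would apply Fubini's theorem (which is justified since $f,\phi\in L^2(\mathbb{R}^2,\mathbb{H})$) to interchange the order of integration, and then perform the change of variables $\mathbf{y}=\mathbf{x-u}$ in the inner $\mathbf{u}$-integral for each fixed $\mathbf{x}$. Since the Jacobian is unity, this yields
\begin{align*}
\int_{\mathbb{R}^2} x_k^{2}\,|f(\mathbf{x})|^{2}\left(\int_{\mathbb{R}^2}|\phi(\mathbf{y})|^{2}\,d\mathbf{y}\right)d\mathbf{x} = \|\phi\|^{2}_{L^2(\mathbb{R}^2)}\int_{\mathbb{R}^2} x_k^{2}\,|f(\mathbf{x})|^{2}\,d\mathbf{x},
\end{align*}
which is exactly the left-hand side.

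There is no real obstacle here; the essence of the argument is just that inverting the QLCT part of the QWLCT peels off a clean windowed product $f(\mathbf{x})\overline{\phi(\mathbf{x-u})}$, after which the $\mathbf{u}$-integral decouples by translation invariance of Lebesgue measure. The only point needing mild care is the application of Fubini, which is fine because $x_k^{2}|f(\mathbf{x})|^{2}|\phi(\mathbf{x-u})|^{2}$ is a non-negative measurable function, so Tonelli's theorem applies unconditionally.
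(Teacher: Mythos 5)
Your proof is correct and is essentially the paper's own argument run in the opposite direction: both rely on the inversion identity (19) to identify $\mathcal{L}_{A_1,A_2}^{-1}\{G_{\phi}^{A_1,A_2}f(\mathbf{w,u})\}(\mathbf{x})=f(\mathbf{x})\overline{\phi(\mathbf{x-u})}$, the multiplicativity of the quaternion modulus, Fubini/Tonelli, and translation invariance of Lebesgue measure to peel off $\|\phi\|^2$. Your explicit Tonelli justification for the nonnegative integrand is a minor extra care the paper leaves implicit.
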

\begin{proof} Using $Fubini^{,}s$  theorem and (19),we get
\begin{align}
		\begin{split}
\|\phi\|^{2}_{L^2(\mathbb{R}^2)} \int_{\mathbb{R}^2}x^{2}_{k}|f(\mathbf{x})|^{2}\rm{d}\mathbf{x}&=
\int_{\mathbb{R}^2} x^{2}_{k}|f(\mathbf{x})|^{2}\rm{d}\mathbf{x} \int_{\mathbb{R}^2}|\phi(\mathbf{x-u})|^{2}\rm{d}\mathbf{u}\\
&=\int_{\mathbb{R}^2}\int_{\mathbb{R}^2}x^{2}_{k}|f(\mathbf{x})|^{2}|\phi(\mathbf{x-u})|^{2}\rm{d}\mathbf{x}\rm{d}\mathbf{u}\\
&=\int_{\mathbb{R}^2}\int_{\mathbb{R}^2}x^{2}_{k}|f(\mathbf{x})\overline{\phi(\mathbf{x-u})}|^{2}\rm{d}\mathbf{x}\rm{d}\mathbf{u}\\
&=\int_{\mathbb{R}^2} \int_{\mathbb{R}^2}x^{2}_{k}|\mathcal{L}_{A_1,A_2}^{-1}\{G_{\phi}^{A_1,A_2}f(\mathbf{w,u})\}(\mathbf{x})|^{2}\rm{d}\mathbf{x}\rm{d}\mathbf{u}
		\end{split}
	\end{align} \end{proof}
Now we arrive at the following important result.
\begin{thm}[QWLCT uncertainty principle]
 Let $\phi \in L^2(\mathbb{R}^2,\mathbb{H})\backslash \{0\}$ be a window function and $G^{A_1,A_2}_{\phi}\{f\} \in L^2\left( \mathbb{R}^2,\mathbb{H}\right)$ be the QWLCT of $f$. Then for every $f \in L^2\left( \mathbb{R}^2,\mathbb{H}\right)$ we have the following inequality
\begin{align}
		\begin{split}
\left(\int_{\mathbb{R}^2} \int_{\mathbb{R}^2}\omega^{2}_{k}
|G_{\phi}^{A_1,A_2}f(\mathbf{w,u})|^{2}\rm{d}\mathbf{w}\rm{d}\mathbf{u}\right )^{\frac{1}{2}} \left(\int_{\mathbb{R}^2}x^{2}_{k}|f(\mathbf{x})|^{2}\rm{d}\mathbf{x}\right )^{\frac{1}{2}}
\geq\frac{b_{k}}{2}\|f\|_{L^2(\mathbb{R}^2)}^{2}\|\phi\|_{L^2(\mathbb{R}^2)}
		\end{split}
	\end{align}\end{thm}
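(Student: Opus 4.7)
The plan is to apply the QLCT uncertainty principle (Lemma with inequality (50)) not to $f$ itself, but to the parametric family of auxiliary signals $g_{\mathbf{u}}(\mathbf{x}) := f(\mathbf{x})\overline{\phi(\mathbf{x-u})}$ indexed by $\mathbf{u}\in\mathbb{R}^2$. The key observation, coming from identity (18), is that the QLCT of $g_{\mathbf{u}}$ (in the variable $\mathbf{x}$) is exactly $G^{A_1,A_2}_{\phi}f(\mathbf{w},\mathbf{u})$, and consequently $\mathcal{L}_{A_1,A_2}^{-1}\{G^{A_1,A_2}_{\phi}f(\cdot,\mathbf{u})\}(\mathbf{x}) = g_{\mathbf{u}}(\mathbf{x})$. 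Plugging $g_{\mathbf{u}}$ into inequality (50) therefore gives, for each fixed $\mathbf{u}$,
\begin{equation*}
\int_{\mathbb{R}^2} x_k^2\,|g_{\mathbf{u}}(\mathbf{x})|^2\,d\mathbf{x}\;\int_{\mathbb{R}^2}\omega_k^2\,|G^{A_1,A_2}_{\phi}f(\mathbf{w},\mathbf{u})|^2\,d\mathbf{w}\;\ge\;\frac{b_k^2}{4}\left(\int_{\mathbb{R}^2}|G^{A_1,A_2}_{\phi}f(\mathbf{w},\mathbf{u})|^2\,d\mathbf{w}\right)^{2}.
\end{equation*}

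The next step is to take square roots of both sides (all integrands are non-negative, so this is harmless) and integrate the resulting inequality with respect to $\mathbf{u}$. To convert the integral of a product of square roots into a product of integrals, I would apply the Cauchy--Schwarz inequality in the variable $\mathbf{u}$ to the left-hand side. This delivers
\begin{equation*}
\left(\int_{\mathbb{R}^4} x_k^2\,|g_{\mathbf{u}}(\mathbf{x})|^2\,d\mathbf{x}\,d\mathbf{u}\right)^{\!\!1/2}\!\left(\int_{\mathbb{R}^4}\omega_k^2\,|G^{A_1,A_2}_{\phi}f(\mathbf{w},\mathbf{u})|^2\,d\mathbf{w}\,d\mathbf{u}\right)^{\!\!1/2}\!\ge\;\frac{b_k}{2}\int_{\mathbb{R}^4}|G^{A_1,A_2}_{\phi}f(\mathbf{w},\mathbf{u})|^2\,d\mathbf{w}\,d\mathbf{u}.
\end{equation*}

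It then remains to simplify the two ``auxiliary'' double integrals. The first one is handled by Lemma 2 (equation (51), the pre-computed identity $\|\phi\|^2_{L^2}\int x_k^2|f|^2 = \int\!\!\int x_k^2|\mathcal{L}^{-1}\{G^{A_1,A_2}_\phi f\}|^2$), which collapses it to $\|\phi\|_{L^2}^2\int x_k^2|f(\mathbf{x})|^2 d\mathbf{x}$. The right-hand side is handled by the orthogonality relation consequence (iii), which yields exactly $\|f\|^2_{L^2}\|\phi\|^2_{L^2}$. Substituting these two identities and dividing through by $\|\phi\|_{L^2}$ produces the claimed inequality.

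I do not anticipate any serious obstacle in this argument; the only subtle step is the Cauchy--Schwarz application in $\mathbf{u}$ after taking square roots, which is the standard device that turns a pointwise quadratic uncertainty bound into a global one, but one should verify that all integrals involved are finite (which follows from $f,\phi\in L^2$, the boundedness result Theorem 1, and the hypothesis $G^{A_1,A_2}_{\phi}f\in L^2(\mathbb{R}^2,\mathbb{H})$) so that the inequalities are not vacuous.
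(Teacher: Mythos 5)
Your proposal is correct and follows essentially the same route as the paper: apply the QLCT uncertainty principle (50) with the QLCT replaced by $G^{A_1,A_2}_{\phi}f(\cdot,\mathbf{u})$ (your auxiliary signal $g_{\mathbf{u}}=f\,\overline{\phi(\cdot-\mathbf{u})}$ just makes the paper's substitution step explicit), take square roots, integrate in $\mathbf{u}$, apply Cauchy--Schwarz, and then invoke Lemma 2 (eq. (51)) and the orthogonality consequence (iii) (eq. (47)) before dividing by $\|\phi\|_{L^2}$. No discrepancies with the paper's argument.
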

\begin{proof}
Assume that $ \mathcal{L}_{A_1,A_2}^{\mathbb{H}}\{f\}\in L^2\left( \mathbb{R}^2,\mathbb{H}\right)$.
Since $G^{A_1,A_2}_{\phi}f \in L^2\left( \mathbb{R}^2,\mathbb{H}\right)$
we can replace the QLCT of $f$ by the QWLCT of $f$ on the both sides of (50). We have
\begin{align}
		\begin{split}
\int_{\mathbb{R}^2}x^{2}_{k}|\mathcal{L}_{A_1,A_2}^{-1}[G_{\phi}^{A_1,A_2}
f(\mathbf{w,u})]|^{2}\rm{d}\mathbf{x}\int_{\mathbb{R}^2}\omega^{2}_{k}
|G_{\phi}^{A_1,A_2}\textit{f}(\mathbf{w,u})|^{2}\rm{d}\mathbf{w}
\geq \frac{b_{k}^{2}}{4}\left(\int_{\mathbb{R}^2}|G_{\phi}^{A_1,A_2}f(\mathbf{w,u})|^{2}\rm{d}\mathbf{w}\right)^{2}
		\end{split}
	\end{align}
Taking the square root on both sides of (54) and integrating both sides with respect
to $ \rm{d}\mathbf{u}$ , we have
\begin{align}
		\begin{split}
\int_{\mathbb{R}^2}\left\{\left(\int_{\mathbb{R}^2}x^{2}_{k}|\mathcal{L}_{A_1,A_2}^{-1}[G_{\phi}^{A_1,A_2}
f(\mathbf{w,u})]|^{2}\rm{d}\mathbf{x}\right)^{\frac{1}{2}}
\left(\int_{\mathbb{R}^2}\omega^{2}_{k}
|G_{\phi}^{A_1,A_2}f(\mathbf{w,u})|^{2}\rm{d}\mathbf{w}\right)^{\frac{1}{2}}\right\}\rm{d}\mathbf{u}\\
\geq \frac{b_{k}}{2}\int_{\mathbb{R}^2}\int_{\mathbb{R}^2}|G_{\phi}^{A_1,A_2}f(\mathbf{w,u})|^{2}
\rm{d}\mathbf{w}\rm{d}\mathbf{u}\\
		\end{split}
	\end{align}
Furthermore, applying the Cauchy-Schwarz inequality to the left-hand side
of (55), we have
\begin{align}
		\begin{split}
\left(\int_{\mathbb{R}^2}\int_{\mathbb{R}^2}x^{2}_{k}|\mathcal{L}_{A_1,A_2}^{-1}[G_{\phi}^{A_1,A_2}
f(\mathbf{w,u})]|^{2}\rm{d}\mathbf{x}\rm{d}\mathbf{u}\right)^{\frac{1}{2}}
\left(\int_{\mathbb{R}^2}\int_{\mathbb{R}^2}\omega^{2}_{k}
|G_{\phi}^{A_1,A_2}f(\mathbf{w,u})|^{2}\rm{d}\mathbf{w}\rm{d}\mathbf{u}\right)^{\frac{1}{2}}\\
 \geq\frac{b_{k}}{2}\int_{\mathbb{R}^2}\int_{\mathbb{R}^2}|G_{\phi}^{A_1,A_2}
 f(\mathbf{w,u})|^{2}\rm{d}\mathbf{w}\rm{d}\mathbf{u}
		\end{split}
	\end{align}
Inserting (51) into the second term on the left-hand side of (56) and substituting
(47) into the right-hand side of this inequality. Then, we have
\begin{align}
		\begin{split}
\left(\|\phi\|^{2}_{L^2(\mathbb{R}^2)} \int_{\mathbb{R}^2}x^{2}_{k}|f(\mathbf{x})|^{2}\rm{d}\mathbf{x}\right)^{\frac{1}{2}} \left(\int_{\mathbb{R}^2}\int_{\mathbb{R}^2}\omega^{2}_{k}
|G_{\phi}^{A_1,A_2}f(\mathbf{w,u})|^{2}\rm{d}\mathbf{w}\right)^{\frac{1}{2}}
 \geq \frac{b_{k}}{2}\|f\|^{2}_{L^2(\mathbb{R}^2)} \|\phi\|^{2}_{L^2(\mathbb{R}^2)}
		\end{split}
	\end{align}
Dividing both sides of (57) by $\|\phi\|_{L^2(\mathbb{R}^2)}$, we obtain the desired result.\end{proof}

\medskip

\noindent{\bf 5. Examples of the QWLCT}

Given the window function of the two-dimensional Haar function defined by
\begin{align}
		\begin{split}
			\phi(\mathbf{x})=\begin{cases}
		1,   &0\leq x_{1}<\frac{1}{2}, 0\leq x_{2}<\frac{1}{2} \\
		-1,    &\frac{1}{2}\leq x_{1}<1 , \frac{1}{2}\leq x_{2}<1  \\
         0,   &otherwise,
		\end{cases}
		\end{split}
	\end{align}
Consider a 2D Gaussian quaternionic
function of the form $ f(x_{1},x_{2})=e^{-(\alpha_{1}x_{1}^{2}+\alpha_{2}x_{2}^{2})}$, for
$ \alpha_{1}, \alpha_{2}\in \mathbb{R}$  are positive real constants.

Then the QWLCT of $ f $ is given by
\begin{align}
		\begin{split}
G^{A_{1},A_{2}}_{\phi}f(\mathbf{w,u})&=\int_{\mathbb{R}^2}\frac{1}{\sqrt{2\pi b_1}}e^{\mathbf{i}\left(\frac{a_1}{2b_1}x_1^2-\frac{x_1u_1}{b_1}+\frac{d_1}{2b_1}u_1^2-\frac{\pi}{4} \right) }f(\mathbf{x}) \overline{\phi(\mathbf{x-u})}\times\frac{1}{\sqrt{2\pi b_2}}e^{\mathbf{j}\left(\frac{a_2}{2b_2}x_2^2-\frac{x_2u_2}{b_2}+\frac{d_2}{2b_2}u_2^2-\frac{\pi}{4} \right) }\rm{d}\mathbf{x}\\
&=\int^{\frac{1}{2}+u_{1}}_{u_{1}}\frac{1}{\sqrt{2\pi b_1}}e^{\mathbf{i}\left(\frac{a_1}{2b_1}x_1^2-\frac{x_1u_1}{b_1}+\frac{d_1}{2b_1}u_1^2-\frac{\pi}{4} \right) }e^{-\alpha_{1}x_{1}^{2}}\rm{d}\textit{x}_{1}\int^{\frac{1}{2}+u_{2}}_{u_{2}}\frac{1}{\sqrt{2\pi b_2}}e^{\mathbf{j}\left(\frac{a_2}{2b_2}x_2^2-\frac{x_2u_2}{b_2}+\frac{d_2}{2b_2}u_2^2-\frac{\pi}{4} \right) }e^{-\alpha_{2}x_{2}^{2}}\rm{d}\textit{x}_{2}\\
&-\int^{1+u_{1}}_{\frac{1}{2}+u_{1}}\frac{1}{\sqrt{2\pi b_1}}e^{\mathbf{i}\left(\frac{a_1}{2b_1}x_1^2-\frac{x_1u_1}{b_1}+\frac{d_1}{2b_1}u_1^2-\frac{\pi}{4} \right) }e^{-\alpha_{1}x_{1}^{2}}\rm{d}\textit{x}_{1}\int^{1+u_{2}}_{\frac{1}{2}+u_{2}}\frac{1}{\sqrt{2\pi b_2}}e^{\mathbf{j}\left(\frac{a_2}{2b_2}x_2^2-\frac{x_2u_2}{b_2}+\frac{d_2}{2b_2}u_2^2-\frac{\pi}{4} \right) }e^{-\alpha_{2}x_{2}^{2}}\rm{d}\textit{x}_{2}\\
		\end{split}
	\end{align}
By completing squares, we have
\begin{align}
		\begin{split}
G^{A_{1},A_{2}}_{\phi}f(\mathbf{w,u})&=\int^{\frac{1}{2}+u_{1}}_{u_{1}}\frac{1}{\sqrt{2\pi b_1}}e^{-\left(\sqrt{\frac{2b_{1}\alpha_{1}-\mathbf{i}a_{1}}{2b_{1}}}x_{1}+\frac{\mathbf{i}
\omega_{1}}{\sqrt{2b_{1}(2b_{1}\alpha_{1}-\mathbf{i}a_{1})}} \right)^{2}} e^{-\frac{\omega_{1}^{2}}{2b_{1}\alpha_{1}-\mathbf{i}a_{1}}+\frac{\mathbf{i}d_{1}}{2b_{1}}
\omega_{1}^{2}-\frac{\pi}{4}}\rm{d}\textit{x}_{1}\\
&\times \int^{\frac{1}{2}+u_{2}}_{u_{2}}\frac{1}{\sqrt{2\pi b_2}}e^{-\left(\sqrt{\frac{2b_{2}\alpha_{2}-\mathbf{j}a_{2}}{2b_{2}}}x_{2}+\frac{\mathbf{j}
\omega_{2}}{\sqrt{2b_{2}(2b_{2}\alpha_{2}-\mathbf{j}a_{2})}} \right)^{2}} e^{-\frac{\omega_{2}^{2}}{2b_{2}\alpha_{2}-\mathbf{j}a_{2}}+\frac{\mathbf{j}d_{2}}{2b_{2}}
\omega_{2}^{2}-\frac{\pi}{4}}\rm{d}\textit{x}_{2}\\
&-\int^{1+u_{1}}_{\frac{1}{2}+u_{1}}\frac{1}{\sqrt{2\pi b_1}}e^{-\left(\sqrt{\frac{2b_{1}\alpha_{1}-\mathbf{i}a_{1}}{2b_{1}}}x_{1}+\frac{\mathbf{i}
\omega_{1}}{\sqrt{2b_{1}(2b_{1}\alpha_{1}-\mathbf{i}a_{1})}} \right)^{2}} e^{-\frac{\omega_{1}^{2}}{2b_{1}\alpha_{1}-\mathbf{i}a_{1}}+\frac{\mathbf{i}d_{1}}{2b_{1}}
\omega_{1}^{2}-\frac{\pi}{4}}\rm{d}\textit{x}_{1}\\
&\times \int^{1+u_{2}}_{\frac{1}{2}+u_{2}}\frac{1}{\sqrt{2\pi b_2}}e^{-\left(\sqrt{\frac{2b_{2}\alpha_{2}-\mathbf{j}a_{2}}{2b_{2}}}x_{2}+\frac{\mathbf{j}
\omega_{2}}{\sqrt{2b_{2}(2b_{2}\alpha_{2}-\mathbf{j}a_{2})}} \right)^{2}} e^{-\frac{\omega_{2}^{2}}{2b_{2}\alpha_{2}-\mathbf{j}a_{2}}+\frac{\mathbf{j}d_{2}}{2b_{2}}
\omega_{2}^{2}-\frac{\pi}{4}}\rm{d}\textit{x}_{2}\\
		\end{split}
	\end{align}
Making the substitutions $ A_{1}=\sqrt{\frac{2b_{1}\alpha_{1}-\mathbf{i}a_{1}}{2b_{1}}},  A_{2}=\sqrt{\frac{2b_{2}\alpha_{2}-\mathbf{j}a_{2}}{2b_{2}}},
B_{1}=\frac{\mathbf{i}\omega_{1}}{\sqrt{2b_{1}(2b_{1}\alpha_{1}-\mathbf{i}a_{1})}}, B_{2}=\frac{\mathbf{j}\omega_{2}}{\sqrt{2b_{2}(2b_{2}\alpha_{2}-\mathbf{j}a_{2})}},\\
J_{1}= e^{-\frac{\omega_{1}^{2}}{2b_{1}\alpha_{1}-\mathbf{i}a_{1}}+\frac{\mathbf{i}d_{1}}{2b_{1}}\omega_{1}^{2}-\frac{\pi}{4}}$ and
$J_{2}= e^{-\frac{\omega_{2}^{2}}{2b_{2}\alpha_{2}-\mathbf{j}a_{2}}+\frac{\mathbf{j}d_{2}}{2b_{2}}\omega_{2}^{2}-\frac{\pi}{4}}$
in the above expression we immediately obtain
\begin{align}
		\begin{split}
G^{A_{1},A_{2}}_{\phi}f(\mathbf{w,u})&=\int^{\frac{1}{2}+u_{1}}_{u_{1}}\frac{1}{\sqrt{2\pi b_1}}e^{-\left(A_{1}x_{1}+B_{1} \right)^{2}}J_{1}\rm{d}\textit{x}_{1}
\int^{\frac{1}{2}+u_{2}}_{u_{2}}\frac{1}{\sqrt{2\pi b_2}}e^{-\left(A_{2}x_{2}+B_{2} \right)^{2}}J_{2}\rm{d}\textit{x}_{2}\\
&-\int^{1+u_{1}}_{\frac{1}{2}+u_{1}}\frac{1}{\sqrt{2\pi b_1}}e^{-\left(A_{1}x_{1}+B_{1} \right)^{2}}J_{1}\rm{d}\textit{x}_{1}\int^{1+u_{2}}_{\frac{1}{2}+u_{2}}\frac{1}{\sqrt{2\pi b_2}}e^{-\left(A_{2}x_{2}+B_{2} \right)^{2}}J_{2}\rm{d}\textit{x}_{2}\\
		\end{split}
	\end{align}
Substituting $ y_{1}=A_{1}x_{1}+B_{1} $ and $ y_{2}=A_{2}x_{2}+B_{2}$ in the above equation, we have
\begin{align}
		\begin{split}
G^{A_{1},A_{2}}_{\phi}f(\mathbf{w,u})
&=\frac{J_{1}}{A_{1}\sqrt{2\pi b_{1}}} \int^{A_{1}(\frac{1}{2}+u_{1})+B_{1}}_{A_{1}u_{1}+B_{1}}e^{-y_{1}^{2}}\rm{d}\textit{y}_{1}\times
\frac{J_{2}}{A_{2}\sqrt{2\pi b_{2}}} \int^{A_{2}(\frac{1}{2}+u_{2})+B_{2}}_{A_{2}u_{2}+B_{2}}e^{-y_{2}^{2}}\rm{d}\textit{y}_{2}\\
&-\frac{J_{1}}{A_{1}\sqrt{2\pi b_{1}}} \int^{A_{1}(1+u_{1})+B_{1}}_{A_{1}(\frac{1}{2}+u_{1})+B_{1}}e^{-y_{1}^{2}}\rm{d}\textit{x}_{1}\times
\frac{J_{2}}{A_{2}\sqrt{2\pi b_{2}}}  \int^{A_{2}(1+u_{2})+B_{2}}_{A_{2}(\frac{1}{2}+u_{2})+B_{2}}e^{-y_{2}^{2}}\rm{d}\textit{x}_{2}\\
&=\frac{J_{1}}{A_{1}\sqrt{2\pi b_{1}}}
\times \left(\int^{A_{1}u_{1}+B_{1}}_{0}(-e^{-y_{1}^{2}})\rm{d}\textit{y}_{1}+
\int^{A_{1}(\frac{1}{2}+u_{1})+B_{1}}_{0}e^{-y_{1}^{2}}\rm{d}\textit{y}_{1}\right)\\
&\times \frac{J_{2}}{A_{2}\sqrt{2\pi b_{2}}}
\times \left(\int^{A_{2}u_{2}+B_{2}}_{0}(-e^{-y_{2}^{2}})\rm{d}\textit{y}_{2}+
\int^{A_{2}(\frac{1}{2}+u_{2})+B_{2}}_{0}e^{-y_{2}^{2}}\rm{d}\textit{y}_{2}\right)\\
&-\frac{J_{1}}{A_{1}\sqrt{2\pi b_{1}}}
\times \left(\int^{A_{1}(\frac{1}{2}+u_{1})+B_{1}}_{0}(-e^{-y_{1}^{2}})\rm{d}\textit{y}_{1}+
\int^{A_{1}(1+u_{1})+B_{1}}_{0}e^{-y_{1}^{2}}\rm{d}\textit{y}_{1}\right)\\
&\times \frac{J_{2}}{A_{2}\sqrt{2\pi b_{2}}}\times\left(\int^{A_{2}(\frac{1}{2}+u_{2})+B_{2}}_{0}(-e^{-y_{2}^{2}})\rm{d}\textit{y}_{2}+
\int^{A_{2}(1+u_{2})+B_{2}}_{0}e^{-y_{2}^{2}}\rm{d}\textit{y}_{2}\right)
		\end{split}
	\end{align}

Equation (62) can be written in the form
\begin{align}
		\begin{split}
G^{A_{1},A_{2}}_{\phi}f(\mathbf{w,u})&=
\frac{J_{1}}{2\sqrt{2b_{1}\alpha_{1}-\mathbf{i}\alpha_{1}}}\times \left(-erf(A_{1}u_{1}+B_{1})+erf(A_{1}(\frac{1}{2}+u_{1})+B_{1})\right)\\
&\times \frac{J_{2}}{2\sqrt{2b_{2}\alpha_{2}-\mathbf{j}\alpha_{2}}}\times \left(-erf(A_{2}u_{2}+B_{2})+erf(A_{2}(\frac{1}{2}+u_{2})+B_{2})\right)\\
&-\frac{J_{1}}{2\sqrt{2b_{1}\alpha_{1}-\mathbf{i}\alpha_{1}}} \times\left(-erf(A_{1}(\frac{1}{2}+u_{1})+B_{1})+erf(A_{1}(1+u_{1})+B_{1})\right)\\
&\times \frac{J_{2}}{2\sqrt{2b_{2}\alpha_{2}-\mathbf{j}\alpha_{2}}}\times \left(-erf(A_{2}(\frac{1}{2}+u_{2})+B_{2})+erf(A_{2}(1+u_{2})+B_{2})\right)\\
		\end{split}
	\end{align}

where $erf(x)=\frac{2}{\sqrt{\pi}}\int^{x}_{0}e^{-t^{2}}\rm{d}\textit{t}$.

The WLCT plays a vital role in  the analysis of signals [8,35]. The windowed linear canonical kernel provides a unique representation for
the signals so that the WLCT is highly effective. It is natural to think whether the QWLCT can also be applied to such problems. For illustrative purposes, we shall give the example above.

\medskip

\noindent  {\bf 6. Conclusions}

Due to the non-commutativity of quaternion multiplication, there are three different types of the QLCT: the left-sided QLCT, the
right-sided QLCT, and the two-sided QLCT.
Using the basic concepts of quaternion algebra and the two-sided QLCT we introduced the the two-sided QWLCT.
Important properties of the QWLCT such as boundedness, linearity, parity, shift, modulation, inversion formula and orthogonality relation were derived.
Based on the QWLCT properties and the uncertainty principle for the QLCT, a new uncertainty principle for the QWLCT have been established.
The results in this paper are new in the literature. Further
investigations on this topic are now under investigation such as the left-sided QWLCT, the
right-sided QWLCT. They will be reported in a forthcoming paper.
\medskip

\noindent  {\bf Acknowledge}

This work is supported by the National Natural Science Foundation of China (No. 61671063).


\end{document}